\newcommand{\longhookrightarrow}{\lhook\joinrel\longrightarrow}
\tikzstyle{nodal}=[circle,draw,fill=black,inner sep=0pt, minimum width=4pt]
\tikzstyle{half-fiber}=[rectangle,draw=black,thick,inner sep=0pt, minimum width=5pt, minimum height=5pt]
\tikzset{double distance = 2pt}
\begin{document}

\author{Gebhard Martin}
\address{Mathematisches Institut \\ Universität Bonn \\ Endenicher Allee 60 \\ 53115 Bonn \\ Germany}
\email{gmartin@math.uni-bonn.de} 

\author{Giacomo Mezzedimi}
\address{Mathematisches Institut \\ Universität Bonn \\ Endenicher Allee 60 \\ 53115 Bonn \\ Germany}
\email{mezzedim@math.uni-bonn.de}

\author{Davide Cesare Veniani}
\address{IDSR \\ Universität Stuttgart \\ Pfaffenwaldring 57 \\ 70569 Stuttgart \\ Germany}
\email{davide.veniani@mathematik.uni-stuttgart.de}

\title{Nodal Enriques surfaces are Reye congruences}

\date{\today}

\subjclass[2020]{14J28 (14J10)}
\keywords{Enriques surface, Reye congruence, Fano polarization}

\begin{abstract}
We show that every classical Enriques surface containing a smooth rational curve is a Reye congruence.
\end{abstract}

\maketitle

\section{Introduction}

Let \(X\) be a classical Enriques surface, that is, a smooth and proper surface over an algebraically closed field of arbitrary characteristic \(p \geq 0\) with \(b_2(X) = 10\) and such that its canonical class~\(K_X\) is numerically, but not linearly, trivial.   

A smooth quadric hypersurface in \(\IP^5\) is isomorphic to the Grassmann variety \(\IG(1,3)\), and an irreducible surface in \(\IG(1,3)\) is classically called a \emph{congruence} of lines in~\(\IP^3\). We say that \(X\) is a \emph{Reye congruence} (\Cref{def:Reye.polarization}) if \(X\) admits a morphism \(\varphi_H\colon X \to \IP^5\), given by a \emph{Fano polarization} \(H\) (\Cref{def:Fano.polarization}), onto a possibly singular, non-degenerate surface \(Y\) of degree \(10\) such that \(\varphi_H\) factors through a smooth quadric hypersurface. 

The surface \(Y\) admits a hyperplane section that splits into two plane cubics and a quartic curve with a smooth rational component (see \cite[(2.4.1)]{Cossec:Reye} or \Cref{lem:existence.Reye.bundle} below). In particular, every Reye congruence is a \emph{nodal} Enriques surface, that is, an Enriques surface that contains a \((-2)\)-curve.

A dimension count (see, e.g., \cite[p.~737]{Cossec:Reye}) shows that Reye congruences form a \(9\)-dimensional family of Enriques surfaces, which therefore coincides generically with the irreducible \(9\)-di\-men\-sion\-al family of nodal Enriques surfaces in the \(10\)-dimensional moduli space of all Enriques surfaces. Despite modern studies of Reye congruences such as \cite{conte.verra} and \cite{Cossec:Reye}, the question whether an arbitrary nodal Enriques surface is a Reye congruence has so far remained open (see \cite[Conjecture~(4.1)]{conte.verra}, \cite[Problem~8]{Dolgachev.Whatisleft}, \cite[p.~126]{DolgachevKondoBook}). The aim of this paper is to give a positive answer to this classical question.

\begin{theorem} \label{thm:main.theorem.introduction}
Every classical nodal Enriques surface is a Reye congruence.
\end{theorem}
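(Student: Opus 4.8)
The plan is to construct, on every classical nodal Enriques surface $X$, a Fano polarization that is a Reye polarization in the sense of \Cref{def:Reye.polarization}, rather than to deform. One might hope instead to deform a given nodal surface inside the irreducible $9$-dimensional nodal family to a generic member, which is classically known to be a Reye congruence, and then to specialise the Reye structure; I expect this to fail, since in the limit the quadric through the image is only guaranteed to remain a quadric, not to remain \emph{smooth}, hence need not remain a copy of $\IG(1,3)$. So the argument should be direct.

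Concretely, I would first isolate the numerical criterion behind \Cref{lem:existence.Reye.bundle} together with its converse (the latter essentially due to Cossec, see \cite{Cossec:Reye}): a Fano polarization $H$ (\Cref{def:Fano.polarization}) exhibits $X$ as a Reye congruence as soon as $|H|$ contains a member $F_1+F_2+Q$ with $F_1,F_2$ half-fibers of genus-one fibrations and $Q=F_3+R$, where $F_3$ is a half-fiber and $R$ is a $(-2)$-curve meeting each $F_i$ in one point — numerically, that $H$ is represented by $F_1+F_2+F_3+R$ with $F_i^2=0$, $F_i\cdot F_j=1$ for $i\neq j$, $R^2=-2$ and $F_i\cdot R=1$, which forces $H^2=10$, $H\cdot F_i=3$ and $H\cdot R=1$, matching the hyperplane section of two plane cubics and a quartic with a line as a component. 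The theorem then reduces to the purely lattice-theoretic assertion that, whenever the sublattice $N\subseteq\mathrm{Num}(X)$ generated by the classes of $(-2)$-curves is nonzero, the nef cone $\mathrm{Nef}(X)$ contains a class of this shape which is a Fano polarization. To establish it I would take an arbitrary $(-2)$-curve $R$, complete its class to a configuration $(F_1,F_2,F_3,R)$ with the Gram matrix above inside $\mathrm{Num}(X)\cong U\oplus E_8$, make the $F_i$ effective (replacing $F_i$ by $F_i+K_X$ where needed), set $H=F_1+F_2+F_3+R$, and apply reflections in $(-2)$-curves to move $H$ into $\mathrm{Nef}(X)$; such reflections preserve $H^2$, the invariant $\min\{|H\cdot f| : f^2=0,\, f\neq 0\}$, and the numerical shape of the decomposition, so the resulting nef class is still a Fano polarization with a Reye decomposition, provided one checks that this invariant did not drop below the value required in \Cref{def:Fano.polarization}. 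Finally I would invoke base-point-freeness and birationality of $\varphi_H$ — handling classical Enriques surfaces in characteristic $2$ separately — to conclude that $\varphi_H$ maps $X$ onto a non-degenerate surface of degree $10$ whose ambient map factors through a smooth quadric hypersurface, so that $X$ is a Reye congruence.

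The main obstacle is the uniformity of this construction over all nodal surfaces: Cossec's classical argument produces a Reye polarization only when $X$, equivalently the sublattice $N$, is sufficiently generic, and the difficulty is to carry out the lattice completion above, and to control the action of the reflections in $(-2)$-curves on $\mathrm{Nef}(X)$, for an arbitrary nodal surface, in particular for those carrying large configurations of $(-2)$-curves. The two delicate points are to guarantee that a completion with the prescribed intersection numbers always exists, and that, after reflecting into the nef cone, the relevant isotropic-pairing invariant stays large enough to keep $H$ a Fano polarization; when it threatens to drop, one must either choose the $F_i$ more carefully or modify the decomposition, which I expect is best organised as a finite case analysis over the possible nodal root sublattices $N$ and their embeddings into $\mathrm{Num}(X)$, unless a uniform argument can be found. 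A secondary obstacle is the verification of base-point-freeness and of the geometry of $\varphi_H$ for classical Enriques surfaces in characteristic $2$.
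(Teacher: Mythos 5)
Your reduction of the theorem to a ``purely lattice-theoretic assertion'' is where the argument breaks down, in two ways. First, the sufficient criterion you extract is not the right one. The paper's criterion (\Cref{lem:existence.Reye.bundle} and \Cref{thm:main_criterion}) is that \(H-2(F_i+T_i)\) be numerically equivalent to an effective divisor for some half-fiber \(F_i\) with tail \(T_i\) in the \(10\)-sequence of \(H\); equivalently, \(H+K_X\) splits as \((F_i+T_i)+(F_i'+T_i)+C\) with \(F_i'\) the \emph{other} half-fiber of the \emph{same} pencil and \(C\) effective and negative definite --- the two ``plane cubics'' come from one fibration, not from three pairwise-meeting ones. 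Your decomposition \(H\equiv F_1+F_2+F_3+R\) with \(F_i.F_j=1\) and \(F_i.R=1\) gives (ignoring tails) \(H-2F_1\equiv(F_2+F_3-F_1)+R\), a class of square \(-2\) whose effectivity is not implied by the Gram matrix; in the paper it is deduced from the additional hypothesis that \((F_1,F_2,F_3)\) is \emph{special}, i.e.\ that \(F_2+F_3-F_1\) is numerically effective, which is precisely the hypothesis of \Cref{thm:special.3-sequence}. Second, the existence of such an effective decomposition is not detected by \(\Num(X)\cong U\oplus E_8\) together with one \((-2)\)-curve: after reflecting \(H\) into the nef cone, the images of \(F_1,F_2,F_3,R\) are merely numerical classes with the right Gram matrix, and there is no reason for the image of \(R\) to remain an irreducible curve or for the relevant linear systems to be non-empty. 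Being a Fano--Reye polarization depends on the full configuration of \((-2)\)-curves on \(X\); \Cref{sec:counterexample} shows how delicate this is (a surface of non-degeneracy \(10\) on which \emph{no ample} Fano polarization is Fano--Reye), and it also shows that the converse you attribute to Cossec, which covers only ample polarizations and non-degenerate \(10\)-sequences, cannot suffice: the extension of that converse to non-ample \(H\) and degenerate \(10\)-sequences is the content of \Cref{thm:main_criterion} and is a substantial vector-bundle argument, not a remark.

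The geometric input your lattice argument cannot supply is exactly what the paper's proof provides: by \cite[Theorem~3.11]{MMVnd3}, every nodal non-extra-special Enriques surface carries a special \(3\)-sequence; completing it to a \(10\)-sequence with Fano polarization \(H\), either \(H-2(F_1+T_1)\) is already numerically effective, or one passes to the tilted Fano polarization \(\hat H=2H-\sum_{i=1}^3(F_i+T_i)\) of \Cref{prop:nontrivial.tilt}, for which the criterion holds. The three families of extra-special surfaces (existing only in characteristic \(2\), by \cite{MMV:extraspecial}) are then treated by explicit constructions in \Cref{sec:extraspecial}. Neither this dichotomy nor the input from \cite{MMVnd3} can be replaced by completing an arbitrary \((-2)\)-curve inside the abstract lattice and reflecting into the nef cone.
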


In the introduction of \cite{Dolgachev.Reider}, a proof of \Cref{thm:main.theorem.introduction} is claimed to be contained in \cite{DolgachevKondoBook}, which, in its current version, contains only weaker statements (see \cite[Theorem 7.9.1]{DolgachevKondoBook}). 

A linear system of projective dimension \(3\) is called a \emph{web}. 
Given a sufficiently general (more precisely, \emph{regular}, see \cite[Definition 2.1.2]{Cossec:Reye}) web~\(W\) of quadrics in~\(\IP^3\), classical geometers considered the smooth surface \(R(W)\) in \(\IG(1,3)\) of order \(7\) and class \(3\) defined as the set of lines contained in at least two distinct quadrics of~\(W\). Such surfaces, which here we call \emph{classical Reye congruences}, were originally introduced by Darboux \cite{darboux} and then studied by Reye~\cite[Sechzehnter Vortrag]{Reye} and, later, by Fano~\cite{Fano}, who realized that they are examples of Enriques surfaces.
Note that the image of the composition with the Plücker embedding \(R(W) \hookrightarrow \IG(1,3) \hookrightarrow \IP^5\) is a surface of degree~\(10\), so classical Reye congruences are Reye congruences in the sense defined above. 

\begin{remark} \label{rmk:R(W)}
A classical nodal Enriques surface \(X\) that is isomorphic to a classical Reye congruence admits an ample Fano polarization, so it must have non-degeneracy \(10\) (see \Cref{def:non-degeneracy} and \Cref{lem:isotropic.elements.in.10.sequence}). Since there are nodal Enriques surfaces of non-degeneracy smaller than~\(10\), even over \(\IC\) (see, e.g., \cite[Example 4.13]{MMVnd3}), not every nodal Enriques surface is isomorphic to~\(R(W)\) for a suitable~\(W\). 
In \cite[p.~40]{Dolgachev.Reider}, the authors claimed that every nodal Enriques surface of non-degeneracy \(10\) is isomorphic to a classical Reye congruence. It turns out that this statement fails for Enriques surfaces of Kond\={o}'s type~\(\mathrm{VII}\) (see \cite{Kondo:Enriques.finite.aut}), as we explain in \Cref{sec:counterexample}.

Nevertheless, we show in \Cref{rmk:relationtoReye} that \Cref{thm:main.theorem.introduction} implies that every nodal Enriques surface in characteristic different from \(2\) arises from a not necessarily regular web of quadrics via the Reye construction. See \Cref{rmk:relatontoReye2} for a discussion of the case of characteristic \(2\).
\end{remark}

Our strategy of proof for \Cref{thm:main.theorem.introduction} is as follows.
Given a Fano polarization \(H\), the induced morphism \(\varphi_H\colon X \to \IP^5\) factors through a smooth quadric hypersurface if and only if there is a \emph{Reye bundle} associated to~\(H\) (\Cref{def:Reye.bundle}).
In \Cref{lem:existence.Reye.bundle}, we give a necessary and sufficient criterion for the existence of a Reye bundle~\(\cV\) for~\(H\) and we prove in \Cref{thm:main_criterion} that such a Reye bundle~\(\cV\) maps \(X\) birationally to a surface in \(\mathbb{G}(1,3)\). 
This criterion is a generalization of a result of Cossec (see \cite[Theorem~1]{Dolgachev.Reider} and \cite[Section~4]{conte.verra}) to the case of not necessarily ample Fano polarizations. 
Then, in \Cref{thm:special.3-sequence}, we show that every nodal Enriques surface~\(X\) of non-degeneracy at least \(3\) admits a Fano polarization that satisfies our criterion. 
More precisely, we produce a Reye bundle from any special \(3\)-sequence on~\(X\) (see \Cref{rk:geometric.intrepretation} for a geometric interpretation of the construction). 
By \cite[Theorem 1.3]{MMV:extraspecial}, there are only three families of Enriques surfaces of non-degeneracy less than~\(3\), and these only exist in characteristic~\(2\). For these three families, we explicitly construct a Reye bundle in \Cref{sec:extraspecial}.
Finally, in \Cref{sec:counterexample}, we give an example of a nodal Enriques surface of non-degeneracy~\(10\) which is not isomorphic to a classical Reye congruence.

\begin{remark}
We remark that the Reye bundles \(\cV\) constructed in this article are examples of \emph{extremal} vector bundles of rank \(2\), as studied for example in \cite{Brivio}, \cite{Dolgachev.Reider}, \cite{Kim:exc.bundles.nodal}, \cite{Naie:special} and \cite{Zube:exc.vec.bundles}.
\end{remark}

\begin{acknowledgments*}
We thank Igor Dolgachev for helpful comments on a first draft of this article.
\end{acknowledgments*}

\section{Preliminaries} \label{sec:preliminaries}

In this section, we fix our notation and prove a few preliminary results. 
After recalling the definition of \(\Phi\)-function, we review the relationship between Fano polarizations and \(10\)-sequences. 
Moreover, we introduce the notion of negative definite divisor (\Cref{def:neg.def}).

We fix throughout an Enriques surface~\(X\), that is, a smooth and proper surface over an algebraically closed field with numerically trivial canonical class \(K_X\) and \(b_2(X) = 10\). 
We denote by \(\Num(X)\) its numerical lattice. Linear and numerical equivalence are denoted by~\(\sim\) and~\(\equiv\), respectively.
Given a divisor~\(D\), we write \(h^i(D)\) for the dimension of \(H^i(X,\cO_X(D))\). 

A \emph{half-fiber} is a connected, nef divisor \(F\) with \(F^2 = 0\) and \(h^0(F) = 1\). Every Enriques surface carries a half-fiber (see, e.g., \cite[Corollary 2.3.4]{CossecDolgachevLiedtke}). 
For a big and nef divisor \(H\), that is, a nef divisor \(H\) with \(H^2 > 0\), we define (cf. \cite[Lemma~2.4.10]{CossecDolgachevLiedtke})
\[
    \Phi(H) \coloneqq \min\{H.F \mid \text{\(F\) half-fiber on~\(X\)}\}.
\]
We start by recalling an important general result related to Enriques' Reducibility Lemma.

\begin{lemma}[{\cite[Theorem~3.2.1]{Cossec.Dolgachev}} or {\cite[Theorem~2.3.3]{CossecDolgachevLiedtke}}] \label{lem:reducibility}
If \(D\) is an effective divisor with \(D^2 \geq 0\), then there exist non-negative integers \(a_i\) and \((-2)\)-curves \(R_i\) such that \(D \sim D' + \sum_i a_i R_i,\) where \(D'\) is the unique effective nef divisor class in the Weyl orbit of~\(D\). In particular, \(D^2 = D'^2\) and if \(D \neq 0\), then \(D' \neq 0\).
\end{lemma}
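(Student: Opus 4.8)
The plan is to combine the Riemann--Roch theorem with the description of the nef cone as a fundamental domain for the Weyl group. Let $W$ be the group of isometries of $\Num(X)$ generated by the reflections $s_R\colon x\mapsto x+(x.R)R$ in the classes of $(-2)$-curves $R$, fix an ample class $H$, and write $\mathcal{C}^+$ for the set of classes $x\in\Num(X)$ with $x^2\geq 0$ and $x.H>0$, together with $0$. First I would recall the structural facts that $W$ preserves $\mathcal{C}^+$, that $W$ has $\mathrm{Nef}(X)$ as a \emph{strict} fundamental domain on $\mathcal{C}^+$ -- each $W$-orbit in $\mathcal{C}^+$ contains a unique nef class -- and that the walls of the chamber $\mathrm{Nef}(X)$ are exactly the hyperplanes $R^\perp$ for $(-2)$-curves $R$. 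Since $D$ is effective with $D^2\geq 0$, its class lies in $\mathcal{C}^+$ (and $[D]\neq 0$ when $D\neq 0$, an effective numerically trivial divisor being $0$), so there is a unique nef class $D'$ in the $W$-orbit of $[D]$; then $D'^2=D^2$ because $W$ acts by isometries, and $D'=0$ only if $D=0$ since $0$ is $W$-fixed. Finally $D'$ is effective: Riemann--Roch together with $K_X\equiv 0$ gives $h^0(D')\geq\chi(\cO_X(D'))=\chi(\cO_X)+\tfrac12 D'^2=1+\tfrac12 D'^2\geq 1$, where $h^2(D')=h^0(K_X-D')=0$ because $(K_X-D').H<0$.

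Next I would obtain the decomposition by walking from $[D]$ back into the nef chamber along a gallery of reflections in $(-2)$-curves. If $[D]$ is not nef, there is a $(-2)$-curve $R_1$ with $D.R_1=-a_1<0$; put $D_1\coloneqq s_{R_1}(D)=D-a_1R_1$, so that $D=D_1+a_1R_1$, and note $D_1\in\mathcal{C}^+$ (as $W$ preserves $\mathcal{C}^+$) with $D_1.H=D.H-a_1(R_1.H)<D.H$. Iterating produces classes $D=D_0,D_1,D_2,\dots$ with $D_k=D_{k+1}+a_{k+1}R_{k+1}$, each $a_{k+1}\geq 1$ and $R_{k+1}$ a $(-2)$-curve, and $D_{k+1}.H<D_k.H$. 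Since the $D_k.H$ are strictly decreasing positive integers, after finitely many steps one reaches a class $D_n$ with $D_n.R\geq 0$ for every $(-2)$-curve $R$; such a class in $\mathcal{C}^+$ is nef, hence $D_n=D'$ by uniqueness. Unwinding gives $[D]=[D']+\sum_{k=1}^n a_kR_k$ with all $a_k\geq 1$, which is the asserted identity after collecting repeated curves. Passing from $\equiv$ to $\sim$ only requires possibly replacing $D'$ by $D'+K_X$, which is harmless since $D'$ is determined only up to $K_X$ and $2K_X\sim 0$; the remaining assertions $D^2=D'^2$ and the claim that $D'\neq 0$ whenever $D\neq 0$ were already noted.

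The step I expect to be the main obstacle is the structural input used at the start: that $\mathrm{Nef}(X)$ is a strict fundamental domain for $W$ acting on $\mathcal{C}^+$ and that its walls are precisely the orthogonal complements of $(-2)$-curves. This is what simultaneously guarantees the existence and uniqueness of $D'$ and, crucially, ensures that each wall crossing in the gallery above is a reflection in an honest $(-2)$-curve rather than in some other $(-2)$-class -- exactly what keeps all the $R_k$ irreducible and effective, and thus distinguishes the true statement from the (false) naive induction that merely peels off one fixed $(-2)$-component of $D$ at a time. Everything else -- termination via the ample degree, effectiveness of $D'$ via Riemann--Roch, and the $K_X$-bookkeeping -- is then routine.
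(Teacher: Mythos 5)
The paper does not prove this lemma at all: it is quoted verbatim from the literature (Cossec--Dolgachev, Theorem~3.2.1, resp.\ Cossec--Dolgachev--Liedtke, Theorem~2.3.3), so there is no internal proof to compare against. Your wall-crossing argument is essentially the standard proof given in those references and is correct: termination follows from the strict decrease of the positive integers \(D_k.H\), the final class is nef because an irreducible curve meeting a class of \(\overline{\mathcal{C}^+}\) negatively must have negative square and hence be a \((-2)\)-curve by adjunction (\(K_X\equiv 0\)), and effectiveness of \(D'\) follows from Riemann--Roch exactly as you say. The only external input is the reflection-group fact that \(\mathrm{Nef}(X)\) is a strict fundamental domain for \(W\) on \(\mathcal{C}^+\), which you correctly identify and which is needed only for the uniqueness clause of the statement; note also that since each step \(D_{k+1}=D_k+(D_k.R_{k+1})R_{k+1}\) already takes place in \(\Pic(X)\), the relation you obtain is a linear equivalence from the start and no adjustment by \(K_X\) is actually required.
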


\begin{lemma} \label{lem: bigandnefalwayscontainshalffiber}
Given an effective nef divisor \(D\), the following hold:
\begin{enumerate}
    \item If \(D^2 = 0\), then there exists a half-fiber \(F\) and a positive integer \(n\) such that \(D \equiv nF\).
    \item If \(D^2 > 0\) and \(F\) is a half-fiber with \(F.D = \Phi(D)\), then \(h^0(D-F) \neq 0\) and \((D-F)^2 \geq 0\).
\end{enumerate}
\end{lemma}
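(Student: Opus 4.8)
The plan is to prove the two parts separately, using Riemann--Roch (so that \(\chi(\cO_X(D')) = \frac{1}{2}D'^2 + 1\) for every divisor \(D'\), since \(K_X \equiv 0\)) and the Hodge index theorem on \(\Num(X)\), together with two external inputs: the theory of genus-one fibrations on Enriques surfaces and the inequality \(\Phi(D)^2 \le D^2\) for big and nef \(D\). For (1) we may assume \(D \neq 0\) and write \(D \equiv m D_0\) with \(D_0 \in \Num(X)\) primitive and \(m \ge 1\); then \(D_0\) is nef and \(D_0^2 = 0\). I would first check that \(D_0\) is effective: Riemann--Roch gives \(\chi(D_0) = 1\), while \(h^2(D_0) = h^0(K_X - D_0) = 0\) because an effective divisor numerically equivalent to \(-D_0\) would be a nonzero anti-nef divisor, which is impossible (pair it with an ample divisor); hence \(h^0(D_0) \ge 1\). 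I would then argue that \(D_0\) is connected and satisfies \(h^0(D_0) = 1\), so that \(D_0\) is a half-fiber \(F\) and \(D \equiv mF\). This last step can be quoted from the theory of genus-one fibrations on Enriques surfaces (see \cite[Chapter~2]{CossecDolgachevLiedtke}); alternatively it follows from elementary considerations, using that \(\Num(X)\) has signature \((1,9)\) and hence contains no isotropic plane: a disjoint decomposition \(D_0 = A + B\) into nonzero effective divisors, or a nontrivial moving part of \(|D_0|\), would force --- via the Hodge index theorem, the vanishing of effective anti-nef divisors, and the fact that the general fiber of a genus-one fibration is twice a half-fiber --- a class proportional to \(D_0\) but not primitive, contradicting the primitivity of \(D_0\).

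For (2), the core claim is that \((D - F)^2 \ge 0\). Since \(F^2 = 0\) and \(D.F = \Phi(D)\), we have \((D - F)^2 = D^2 - 2\Phi(D)\), so it suffices to show \(2\Phi(D) \le D^2\). I would deduce this from \(\Phi(D)^2 \le D^2\) (see, e.g., \cite[Section~2.4]{CossecDolgachevLiedtke}) together with the fact that \(D^2\) is even (the lattice \(\Num(X)\) is even) and positive: if \(D^2 = 2\), then \(1 \le \Phi(D) \le \sqrt{2}\) forces \(\Phi(D) = 1\) and \((D - F)^2 = 0\); if \(D^2 \ge 4\), then \((2\Phi(D))^2 = 4\Phi(D)^2 \le 4 D^2 \le (D^2)^2\). (That \(\Phi(D) \ge 1\) is immediate from the Hodge index theorem applied to \(D\) and \(F\).) Granting \((D - F)^2 \ge 0\), Riemann--Roch gives \(\chi(D - F) = \frac{1}{2}(D - F)^2 + 1 \ge 1\), and \(h^2(D - F) = h^0(K_X - D + F) = 0\): indeed \(K_X - D + F \equiv F - D\) satisfies \((F - D).F = -\Phi(D) < 0\), so no divisor in that class can be effective, the nef divisor \(F\) meeting every effective divisor non-negatively. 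Therefore \(h^0(D - F) \ge \chi(D - F) \ge 1\), which proves (2).

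I expect the only genuine obstacle to be part (1), namely identifying the primitive nef isotropic class \(D_0\) with the class of a half-fiber: this requires either the cited structure theory of genus-one fibrations or the slightly longer lattice-and-linear-system bookkeeping indicated above. Once the inequality \(\Phi(D)^2 \le D^2\) is available, part (2) is a routine Riemann--Roch computation, and the two statements are logically independent of one another.
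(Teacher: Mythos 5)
Your proof is correct and follows essentially the same route as the paper: part (1) ultimately rests on the same structure result for nef isotropic classes from \cite[Chapter~2]{CossecDolgachevLiedtke} (which the paper simply cites as Corollary~2.2.9 there), and part (2) is the same Riemann--Roch argument based on \(\Phi(D)^2 \le D^2\), with you pairing \(K_X - D + F\) against the nef divisor \(F\) rather than against \(D\) to kill \(h^2\). Your explicit treatment of the case \(D^2 = 2\) via the integrality of \(\Phi(D)\) correctly fills in a step the paper leaves implicit.
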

\begin{proof}
The first claim is \cite[Corollary~2.2.9]{CossecDolgachevLiedtke}. Suppose now that \(D^2 > 0\).
By \cite[Proposition~2.4.11]{CossecDolgachevLiedtke}, we have \(\Phi(D)^2 \leq D^2\). In particular, since \(D^2 \geq 2\), we have \((D-F)^2 = D^2 - 2\Phi(D) \geq 0\), as claimed. 
The Hodge index theorem implies \(\Phi(D) > 0\), which in turn implies \(D.(K_X - D + F) < -(D-F)^2 \leq 0\). Therefore, we have \(h^2(D-F) = h^0(K_X - D + F) = 0\) by Serre duality, whence \(h^0(D-F) \neq 0\) by Riemann--Roch.
\end{proof}

The following inequalities on \(\Phi(H)\) will often be useful.

\begin{proposition} \label{lem:Igors.trick}
Let \(H\) be a big and nef divisor. 
\begin{enumerate}
\item If \(D\) is an effective divisor with \(D^2 \geq 0\), then \(H.D \geq \Phi(H)\).
\item If \(D\) is a divisor with \(h^0(D) \geq 2\), then \(H.D \geq 2\Phi(H)\).
\end{enumerate}
\end{proposition}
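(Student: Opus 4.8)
The plan is to reduce both inequalities to statements about nef divisors and then extract half-fibers, using \Cref{lem:reducibility} and \Cref{lem: bigandnefalwayscontainshalffiber}. For part (1) (where we may assume $D \neq 0$), I would first apply \Cref{lem:reducibility} to replace $D$ by the unique effective nef class $D'$ in its Weyl orbit: since $H$ is nef and $D - D'$ is an effective sum of $(-2)$-curves, we get $H.D \geq H.D'$, while $D' \neq 0$ and $D'^2 = D^2 \geq 0$. If $D'^2 = 0$, then \Cref{lem: bigandnefalwayscontainshalffiber}(1) gives $D' \equiv nF$ for a half-fiber $F$ and an integer $n \geq 1$, so $H.D' = nH.F \geq H.F \geq \Phi(H)$. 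If $D'^2 > 0$, choose a half-fiber $F$ with $F.D' = \Phi(D')$; then \Cref{lem: bigandnefalwayscontainshalffiber}(2) says that $D' - F$ is linearly equivalent to an effective divisor, so $H.D' = H.F + H.(D'-F) \geq \Phi(H) + 0$, because $H$ is nef. This proves (1).

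For part (2), I would pass to the moving part of the linear system: write $|D| = Z + |M|$ with $Z$ the fixed part and $|M|$ free of fixed components, so that $H.D \geq H.M$ and $M \neq 0$ since $h^0(M) = h^0(D) \geq 2$. The key observation is that $M$ is automatically nef: if some irreducible curve $C$ satisfied $M.C < 0$, it would lie in every member of $|M|$ and hence be a fixed component, a contradiction; for the same reason $M^2 \geq 0$, since a negative self-intersection would force one of the components of $M$ into the fixed part. Now I split into two cases. If $M^2 = 0$, then $M \equiv nF$ for a half-fiber $F$ by \Cref{lem: bigandnefalwayscontainshalffiber}(1), and in fact $n \geq 2$: if $n = 1$ then $M$ would be numerically equivalent to a half-fiber, forcing $h^0(M) = 1$ and contradicting $h^0(M) \geq 2$; hence $H.D \geq H.M = nH.F \geq 2\Phi(H)$. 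If $M^2 > 0$, pick a half-fiber $G$ with $G.M = \Phi(M)$; by \Cref{lem: bigandnefalwayscontainshalffiber}(2) the class $M - G$ is effective with $(M-G)^2 \geq 0$, and it is nonzero because $M^2 > 0 = G^2$, so part (1) applies to it and yields $H.D \geq H.M = H.G + H.(M-G) \geq \Phi(H) + \Phi(H) = 2\Phi(H)$.

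Granted \Cref{lem:reducibility} and \Cref{lem: bigandnefalwayscontainshalffiber}, the argument is largely bookkeeping; the only genuinely load-bearing points are the remark that the fixed-component-free part of a linear system on a smooth surface is nef with non-negative self-intersection, and the elementary fact that a divisor numerically equivalent to a half-fiber has a one-dimensional space of sections, which is what closes the case $M^2 = 0$ in part (2). I do not anticipate a real obstacle here.
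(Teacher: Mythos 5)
Your argument is correct and follows essentially the same route as the paper: reduce to the nef case via \Cref{lem:reducibility} (for part (1)) or by discarding the fixed part (for part (2)), then split on whether the self-intersection is zero or positive and extract a half-fiber using \Cref{lem: bigandnefalwayscontainshalffiber}. The only difference is that you spell out a few steps the paper leaves implicit (nefness of the moving part, and why $n\geq 2$ when $M^2=0$), which does no harm.
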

\begin{proof}
For Claim (1), write \(D \sim D' + \sum_i a_iR_i\) as in \Cref{lem:reducibility}. By \Cref{lem: bigandnefalwayscontainshalffiber}, there exist a half-fiber \(F\) and an effective divisor \(D''\) with \(D' \equiv F + D''\). Then, we have that \(H.D \geq H.D' \geq H.F \geq \Phi(H)\).

For Claim (2), write \(D = D' + D''\), where \(D'\) is the movable part and \(D''\) the fixed part. Since \(h^0(D) = h^0(D')\) and \(H.D \geq H.D'\) because \(H\) is nef, we can assume without loss of generality that \(D = D'\). In particular, \(D\) is an effective nef divisor with \(D^2 \geq 0\). 

If \(D^2 = 0\), then \(D \equiv nF\) for some half-fiber \(F\) and some \(n \geq 1\) by \Cref{lem: bigandnefalwayscontainshalffiber}. Since \(h^0(D) \geq 2\), we have \(n \geq 2\), so \(H.D \geq 2H.F \geq 2\Phi(H)\). 
If \(D^2 > 0\), pick a half-fiber \(F\) with \(F.D = \Phi(D)\). Then, \(h^0(D-F) \neq 0\) and \((D-F)^2 \geq 0\) by \Cref{lem: bigandnefalwayscontainshalffiber}. Hence, by Claim (1), we have \(H.(D-F) \geq \Phi(H)\), and so \(H.D = H.F + H.(D-F) \geq 2\Phi(H).\)
\end{proof}

\begin{definition} \label{def:Fano.polarization}
A \emph{Fano polarization} is a nef divisor class \(H\) with \(H^2 = 10\) and \(\Phi(H) = 3\).
\end{definition}

\begin{definition} 
A \emph{\(c\)-degenerate (canonical, isotropic) \(10\)-sequence}, or just a \emph{\(10\)-sequence}, is a \(10\)-tuple of divisors \((E_1,\ldots,E_{10})\) of the form 
\[
    \big(F_1,F_1 + R_{1,1}, \ldots, F_1 + \sum_{j=1}^{r_1} R_{1,j}, \ldots, F_c, F_c + R_{c,1}, \ldots, F_{c} + \sum_{j=1}^{r_c} R_{c,j}\big),
\]
where the \(F_i\) are half-fibers of genus one fibrations on~\(X\) and the \(R_{i,j}\) are \((-2)\)-curves satisfying the following conditions:
\begin{itemize}
    \item \(F_i.F_j = 1 - \delta_{ij}\);
    \item \(R_{i,j}.R_{i,j+1} = 1\);
    \item \(R_{i,j}.R_{k,l} = 0\) unless \((k,l) = (i,j)\) or~\((k,l)=(i,j\pm 1)\);
    \item \(F_i.R_{i,1} = 1\) and \(F_i.R_{k,l} = 0\) if \((k,l) \neq (i,1)\).
\end{itemize}
The divisor \(T_i = \sum_{j = 1}^{r_i}R_{i,j}\) is called the \emph{tail} of \(F_i\) in the \(10\)-sequence.
\end{definition}

\begin{definition} \label{def:non-degeneracy}
The \emph{(maximal) non-degeneracy} of \(X\) is the maximal \(c \in \{1,\ldots,10\}\) for which there exists a \(c\)-degenerate sequence on~\(X\).
\end{definition}

The notions of Fano polarization and \(10\)-sequence are intimately connected. 
Indeed, for each \(10\)-sequence \((E_1,\ldots,E_{10})\), the divisors \(E_i\) satisfy \(E_i.E_j = 1-\delta_{ij}\). Hence, the class of \(\sum_{i=1}^{10} E_i\) is always \(3\)-divisible in \(\Num(X)\). 
More precisely, there exists a unique divisor \(H\) up to linear equivalence such that \(3H \sim \sum_{i = 1}^{10} E_i\). One can easily prove that \(H\) is a Fano polarization. 

Conversely, for each Fano polarization \(H\), it is shown in \cite[Lemma 3.5.2]{CossecDolgachevLiedtke} that there exists a \(10\)-sequence \((E_1,\ldots,E_{10})\) satisfying \(3H \equiv \sum_{i = 1}^{10} E_i\). This \(10\)-sequence is uniquely determined up to reordering and up to exchanging any half-fiber \(F_i\) appearing in it with a numerically equivalent half-fiber \(F_i'\). With a slight abuse of language, we will call it \emph{the} \(10\)-sequence associated to~\(H\). 

\Cref{lem:isotropic.elements.in.10.sequence} characterizes all divisors \(E_i\), and, in particular, the half-fibers \(F_i\) and the \((-2)\)-curves \(R_{i,j}\) appearing in the \(10\)-sequence.

\begin{lemma} \label{lem:isotropic.elements.in.10.sequence}
Given a Fano polarization \(H\), the following hold:
\begin{enumerate}
\item If \(R\) is a \((-2)\)-curve, then \(R\) appears in the \(10\)-sequence associated to \(H\) if and only if \(H.R = 0\).
\item If \(E\) is an effective, isotropic divisor (e.g., a half-fiber), then \(E\) appears in the \(10\)-sequence associated to \(H\) if and only if \(H.E = 3\). In this case, \(h^0(E) = 1\) and \(h^1(E) = 0\).
\end{enumerate}
\end{lemma}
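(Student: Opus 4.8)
The plan is to work throughout in the basis $(E_1,\ldots,E_{10})$ of $\Num(X)_{\IQ}$ furnished by the $10$-sequence associated to $H$, treating separately the two ``forward'' implications together with the cohomological claim in (2) (which are short) and the two converse implications (which carry the content). For the forward directions, $3H\equiv\sum_{k=1}^{10}E_k$ together with $E_k.E_l=1-\delta_{kl}$ gives $3\,H.E_i=E_i^2+\sum_{k\neq i}E_k.E_i=9$, so $H.E_i=3$ for every member of the sequence (this is the forward direction of (2)); and since each $R_{i,j}$ is a difference $E_a-E_{a-1}$ of two consecutive members of the $i$-th block, $H.R_{i,j}=3-3=0$ (the forward direction of (1)). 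For the cohomological claim, let $E$ be any effective isotropic divisor with $H.E=3$: then $H.(K_X-E)=-3<0$, so $K_X-E$ is not effective and $h^2(E)=h^0(K_X-E)=0$ by Serre duality; Riemann--Roch gives $h^0(E)-h^1(E)=\chi(\cO_X(E))=1$ since $E^2=0$; and $h^0(E)\geq2$ is excluded by \Cref{lem:Igors.trick}(2), which would force $H.E\geq2\Phi(H)=6$. Hence $h^0(E)=1$ and $h^1(E)=0$.

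For the converse of (1), let $R$ be a $(-2)$-curve with $H.R=0$ and suppose $R$ is not one of the curves $R_{k,l}$. Each member of the sequence has the form $E_m=F_{b(m)}+\sum_l R_{b(m),l}$, and $R$ meets the half-fiber $F_{b(m)}$ non-negatively (it is nef) and meets every $R_{b(m),l}$ non-negatively (these are irreducible curves distinct from $R$), so $E_m.R\geq0$ for all $m$. Since $\sum_m E_m.R=3\,H.R=0$, this forces $E_m.R=0$ for every $m$; but the Gram matrix of $(E_1,\ldots,E_{10})$ is $J-I$ with $J$ the $10\times10$ all-ones matrix, of determinant $-9$, so the $E_m$ span $\Num(X)_{\IQ}$ and therefore $R^2=0$, contradicting $R^2=-2$. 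Hence $R=R_{k,l}$ for some $k,l$.

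For the converse of (2), let $E$ be effective and isotropic with $H.E=3$ and write $E\equiv\sum_k c_k E_k$ with $c_k\in\IQ$ (possible since the $E_k$ span $\Num(X)_{\IQ}$). Dotting with $E_l$ gives $E.E_l=s-c_l$ where $s\coloneqq\sum_k c_k$, and dotting with $H=\tfrac13\sum_k E_k$ gives $3=H.E=3s$; thus $s=1$ and $c_l=1-E.E_l$ is an integer for every $l$. Now $\sum_l c_l=1$, while $\sum_l c_l^2=\sum_l c_l-E.\big(\sum_l c_l E_l\big)=1-E^2=1$, so exactly one $c_l$ equals $1$ and all others vanish. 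Therefore $E\equiv E_l$ for some $l$, which — recalling that the associated $10$-sequence is determined only up to reordering and up to exchanging half-fibers with numerically equivalent ones — is precisely the statement that $E$ appears in it.

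The step that I expect to require the most care is this last one, and specifically the integrality of the coordinates $c_l$ of $E$ in the basis $(E_1,\ldots,E_{10})$: since these generate a sublattice of index $3$ in $\Num(X)$, a priori $c_l\in\tfrac13\IZ$, and it is the normalization $s=1$ forced by $H.E=3$, combined with the integrality of intersection numbers, that rigidifies the $c_l$ into integers and so pins $E$ down to a genuine member of the sequence; the same mechanism (now with $s=0$ and $\sum_l c_l^2=2$, so that $R\equiv E_a-E_b$) lies behind the converse of (1) as well, although there the direct argument given above is shorter. The remaining work — matching ``$E\equiv E_l$'' with ``$E$ appears in the $10$-sequence'' through the $K_X$-translation ambiguity for the half-fibers, and assembling the pieces — is then routine, and nothing beyond the preliminary results recalled above (in particular, not the explicit construction of the $10$-sequence from \cite{CossecDolgachevLiedtke}) appears to be needed.
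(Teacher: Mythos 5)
Your proof is correct. The forward implications, the computation of $h^0(E)$ and $h^1(E)$, and the converse of (1) follow essentially the same route as the paper (the paper phrases the converse of (1) as: $R.E_i=0$ for all $i$ is impossible since the $E_i$ span $\Num(X)_{\IQ}$ and $R\not\equiv 0$, hence $R.E_i<0$ for some $i$ — logically the same as your contrapositive). Where you genuinely diverge is the converse of (2): the paper argues geometrically, showing via Riemann--Roch and \Cref{lem:Igors.trick} that $E.E_i<0$ is impossible (since one of $E-E_i$, $E_i-E+K_X$ would move) and that $E.E_i=0$ is impossible (negative semidefiniteness of $E_i^\perp$), so $E.E_i\geq 1$ for all $i$ and $H.E\geq \tfrac{10}{3}$, a contradiction. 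Your argument instead expands $E=\sum c_kE_k$ and derives $s=1$, $c_l=1-E.E_l\in\IZ$, $\sum c_l=\sum c_l^2=1$, which pins down $E\equiv E_l$ by pure lattice arithmetic. This is cleaner, avoids Riemann--Roch and effectivity of $E$ entirely for this step, and is in fact the same dual-basis technique the paper itself deploys in the proof of \Cref{lem:H.R=1.or.2}; what the paper's geometric route buys is nothing extra here, so your version is a legitimate (arguably preferable) alternative. Your closing remark about matching ``$E\equiv E_l$'' with ``appears in the $10$-sequence'' is handled at exactly the same level of precision as in the paper, which also reduces the claim to numerical equivalence.
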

\begin{proof}
Let \((E_1,\ldots,E_{10})\) be the \(10\)-sequence associated to~\(H\).

For Claim \((1)\), observe first that \(H.R_{i,j} = 0\) for each \(R_{i,j}\) in the \(10\)-sequence. On the other hand, any \((-2)\)-curve \(R\) with \(H.R = 0\) cannot satisfy \(R.E_i = 0\) for all \(i\), because the \(E_i\) generate \(\Num(X)\) over \(\IQ\) and \(R \not\equiv 0\). Hence, \(R.E_i < 0\) for some \(i\), so \(R\) appears in the \(10\)-sequence.

For Claim \((2)\), observe that \(3H.E_j = \sum_{i = 1}^{10}E_i.E_j = 9\), hence \(H.E_j=3\). 
Conversely, let \(E\) be any effective isotropic divisor with \(H.E=3\), and assume by contradiction that \(E\) is not numerically equivalent to any of the~\(E_i\). We have \(E.E_i\ne 0\), since for each \(i\in \{1,\ldots,10\}\) the sublattice \(E_i^\perp \subseteq \Num(X)\) is negative semidefinite, with \(1\)-dimensional kernel generated by \(E_i\).
We claim that \(E.E_i> 0\) for each \(i\in \{1,\ldots,10\}\). If instead \(E.E_i<0\), then \((E-E_i)^2>0\) and \(h^0(E-E_i)+h^2(E-E_i)=h^0(E-E_i)+h^0(E_i-E+K_X)\ge 2\) by Riemann-Roch. Since \(E-E_i\) and \(E_i-E+K_X\) cannot both be linearly equivalent to an effective divisor, we infer that either \(h^0(E-E_i)\ge 2\) or \(h^0(E_i-E+K_X)\ge 2\),  contradicting \Cref{lem:Igors.trick} in both cases.
Thus \(E.E_i\ge 1\) for each \(i\in \{1,\ldots,10\}\), and so \(H.E \ge \frac{10}{3}>3\), a contradiction.

Finally, if \(H.E = 3\), then \(h^0(E) = 1\) by \Cref{lem:Igors.trick} and \(h^2(E) = 0\) by Serre duality; therefore, \(h^1(E) = 0\) by Riemann--Roch.
\end{proof}

\begin{lemma} \label{lem:H.R=1.or.2}
Given a Fano polarization \(H\), let \(E_i\) be the divisors appearing in its associated \(10\)-sequence, and let \(R\) be a \((-2)\)-curve.
\begin{enumerate}
    \item If \(H.R = 1\), then, up to reordering, \(R.E_i = 1\) for \(i \in \{1,2,3\}\) and \(R.E_i = 0\) else.
    \item If \(H.R = 2\), then, up to reordering, \(R.E_i = 1\) for \(i \in \{1,\ldots,6\}\) and \(R.E_i = 0\) else.
\end{enumerate}
\end{lemma}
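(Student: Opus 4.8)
The plan is to write $a_i := R.E_i$ for $i = 1,\dots,10$ and to exploit the relation $3H \equiv \sum_{i=1}^{10} E_i$, which gives $\sum_{i=1}^{10} a_i = 3H.R$; this sum equals $3$ in case~(1) and $6$ in case~(2). Since the two asserted conclusions amount precisely to the statement that $a_i \in \{0,1\}$ for every $i$, together with the count of indices at which $a_i = 1$, it suffices to prove $0 \le a_i \le 1$ for all $i$, after which the result follows by reordering.

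For the lower bound $a_i \ge 0$ I would argue as follows. If $R.E_i < 0$, then the irreducible curve $R$ is a component of $E_i$. Write $E_i = F + T$, where $F$ is the half-fiber appearing in $E_i$ and $T$ is the (possibly zero) sum of the $(-2)$-curves of the $10$-sequence appearing in $E_i$; each of these $(-2)$-curves meets $H$ trivially by \Cref{lem:isotropic.elements.in.10.sequence}(1), whereas $H.R \ge 1$ by hypothesis, so $R$ cannot be a component of $T$ and must therefore be a component of $F$. But $F$ is a nonzero effective nef divisor with $F^2 = 0$, hence $F.C = 0$ for every component $C$ of $F$; in particular $F.R = 0$, and since $R$ is not a component of $T$ we also have $R.T \ge 0$, so $a_i = R.F + R.T \ge 0$, a contradiction.

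For the upper bound $a_i \le 1$ I would run a Riemann--Roch argument on $D := E_i + R$. Suppose $a_i \ge 2$. Then $D$ is effective with $D^2 = 2a_i - 2 \ge 2$, while $H.D = H.E_i + H.R = 3 + H.R \le 5 < 6 = 2\Phi(H)$; by \Cref{lem:Igors.trick}(2) this forces $h^0(D) \le 1$. On the other hand $H.(K_X - D) = -H.D < 0$, so $K_X - D$ is not effective and $h^2(D) = 0$ by Serre duality, whence Riemann--Roch gives $h^0(D) = \tfrac12 D^2 + 1 + h^1(D) \ge 2$, a contradiction. Combining $0 \le a_i \le 1$ with $\sum_i a_i = 3H.R$ then settles both cases at once.

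The proof is short, and the only slightly delicate point is the identification, in the proof of the lower bound, of which component of $E_i$ the curve $R$ could be: this is exactly where the hypothesis $H.R \ge 1$ is combined with the vanishing $H.R_{i,j} = 0$ for the $(-2)$-curves of the $10$-sequence. Everything else is routine bookkeeping with the intersection numbers $E_i.E_j = 1 - \delta_{ij}$, $R^2 = -2$ and $H.E_i = 3$.
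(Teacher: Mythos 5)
Your proof is correct, and while it shares the paper's starting point --- setting \(a_i = R.E_i\), noting \(\sum_i a_i = 3H.R\) and \(a_i \ge 0\) --- it finishes by a genuinely different argument. The paper rules out any \(a_i \ge 2\) lattice-theoretically: since the \(E_i\) span \(\Num(X)\) over \(\IQ\) with dual basis \(\tfrac13 H - E_i\), the numbers \(a_i\) determine \(R^2\), and one checks that among the finitely many non-negative integer vectors with sum \(3H.R\) only the one with all entries in \(\{0,1\}\) yields \(R^2 = -2\). You instead prove the bound \(a_i \le 1\) cohomologically: if \(a_i \ge 2\), then \(D = E_i + R\) is effective with \(D^2 \ge 2\) and \(H.D \le 5 < 2\Phi(H)\), so Riemann--Roch forces \(h^0(D) \ge 2\) while \Cref{lem:Igors.trick}(2) forces \(h^0(D) \le 1\). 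Your route avoids the case enumeration (which is noticeably longer for \(H.R = 2\)) at the price of invoking \(\Phi\) and cohomology, whereas the paper's computation stays entirely inside the lattice; both are sound. A small merit of your write-up is that you justify \(a_i \ge 0\) in full, treating separately the possibility that \(R\) is a component of a tail (excluded because \(H.R > 0\)) and of a half-fiber (excluded by nefness of the half-fiber), a point the paper passes over by simply remarking that \(R\) does not appear in the \(10\)-sequence.
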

\begin{proof}
In both cases, \(R\) does not appear in the \(10\)-sequence because \(H.R \neq 0\). Up to reordering, we can suppose that \(R.E_1 \geq \dots \geq R.E_{10} \geq 0\). 

If \(H.R = 1\), then \(\sum R.E_i = 3\), which implies that the vector \(v \in \IZ^{10}\) with coordinates \(v_i = R.E_i\) is equal to either \((3,0,\ldots,0)\), \((2,1,0,\ldots,0)\) or \((1,1,1,0,\ldots,0)\). Since the divisors~\(E_i\) generate \(\Num(X)\) over ~\(\IQ\), the intersection numbers \(R.E_i\) determine \(R^2\) uniquely. 
More precisely, the dual vector of \(E_i\) is \(\frac{1}{3}H - E_i\), so \(R^2 = (\frac{1}{3}\sum v_iH - \sum v_iE_i)^2\). Thus, \(R^2 = -2\) if and only if \(v = (1,1,1,0,\ldots,0)\), which yields the claim. 
An analogous argument applies to the case \(H.R = 2\).
\end{proof}

\begin{proposition} \label{prop:nefness.H-F}
Let \(H\) be a Fano polarization, let \(F\) be a half-fiber, and suppose that \(E = F +R_{1}+\ldots+R_{n}\) is a divisor appearing in the \(10\)-sequence associated to~\(H\). Then, the divisor \(H-E\) is nef if and only if \(n\) is the length of the tail of~\(F\). In this case, \(h^0(H - E) = 3\).
\end{proposition}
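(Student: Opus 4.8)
The plan is to reduce the nefness of \(H-E\) to a finite check against \((-2)\)-curves, and to treat the two implications of the biconditional separately. First I would record the numerical data: since \(E\) appears in the \(10\)-sequence associated to \(H\), we have \(E^2=0\) and \(H.E=3\) by \Cref{lem:isotropic.elements.in.10.sequence}, so \((H-E)^2=4\) and \(H.(H-E)=7\). Riemann--Roch then gives \(\chi(H-E)=3\), and since \(H\) is nef while \(H.\bigl(K_X-(H-E)\bigr)=-7<0\), the class \(K_X-(H-E)\) is not effective; hence \(h^2(H-E)=0\) and therefore \(h^0(H-E)\geq 3\). Because \((H-E)^2>0\) and \((H-E).H>0\) for the big and nef class \(H\), a standard Hodge-index argument—using that every irreducible curve on an Enriques surface has self-intersection \(\geq -2\)—reduces the nefness of \(H-E\) to verifying \((H-E).C\geq 0\) for every \((-2)\)-curve \(C\).

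For the ``only if'' direction I would argue by contraposition. Write the tail of \(F\) in the \(10\)-sequence as \(R_1,\dots,R_r\), so \(E=F+R_1+\dots+R_n\). If \(n<r\), then the next curve \(R_{n+1}\) in the tail is a \((-2)\)-curve with \(H.R_{n+1}=0\), and the intersection relations in the definition of a \(10\)-sequence give \(E.R_{n+1}=F.R_{n+1}+\sum_{j\leq n}R_j.R_{n+1}=1\); hence \((H-E).R_{n+1}=-1\) and \(H-E\) is not nef.

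For the ``if'' direction, assume \(E=F+R_1+\dots+R_r\) has the full tail, and split the check \((H-E).C\geq 0\) according to the value of \(H.C\). If \(H.C=0\), then \(C\) appears in the \(10\)-sequence by \Cref{lem:isotropic.elements.in.10.sequence}, and a direct computation with the chain relations of the tail shows \((H-E).C\in\{0,1\}\). If \(H.C=1\), then \Cref{lem:H.R=1.or.2} gives \(C.E_j\leq 1\) for all \(j\), so \(E.C\leq 1=H.C\). If \(H.C\geq 2\), I would show that \(C\) cannot be a fixed component of \(|H-E|\): otherwise \(h^0(H-E-C)=h^0(H-E)\geq 2\), so \(H.(H-E-C)\geq 2\Phi(H)=6\) by \Cref{lem:Igors.trick}, contradicting \(H.(H-E-C)=7-H.C\leq 5\); consequently some member of \(|H-E|\) does not contain \(C\), and \((H-E).C\geq 0\). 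Finally, once \(H-E\) is known to be nef, the equality \(h^0(H-E)=3\) follows from \(\chi(H-E)=3\) together with the vanishing \(h^1(H-E)=0\) for a nef divisor of positive self-intersection on an Enriques surface.

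The hard part will be the case \(H.C\geq 2\): there is no a priori numerical bound on \(E.C\), so purely numerical input cannot suffice, and the crux is the observation—made possible by the \(\Phi\)-inequality of \Cref{lem:Igors.trick}—that such a \((-2)\)-curve cannot lie in the base locus of \(|H-E|\). Besides this, the case \(H.C=0\) is routine but must be checked carefully at the two ends of the tail, and the concluding equality \(h^0(H-E)=3\) genuinely requires the Enriques-surface vanishing theorem rather than Riemann--Roch alone.
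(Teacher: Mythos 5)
Your proof is correct and follows essentially the same route as the paper's: effectivity of \(H-E\) via Riemann--Roch and \(h^2=0\), the \(\Phi\)-inequality of \Cref{lem:Igors.trick} to rule out fixed \((-2)\)-curves \(C\) with \(H.C\ge 2\), \Cref{lem:H.R=1.or.2} for \(H.C=1\), and the chain relations for \(H.C=0\) (where the paper instead identifies the offending curve as \(R_{n+1}\), which is only a cosmetic difference). The concluding vanishing \(h^1(H-E)=0\) for the nef case is also the paper's argument.
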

\begin{proof}
If \(R_{n+1}\) exists, then \(H-E\) is not nef, since \((H-E).R_{n+1}<0\). 

For the converse implication, let \(R\) be a \((-2)\)-curve such that \((H-E).R < 0\). Since \(E\) appears in the \(10\)-sequence associated to~\(H\), we have \(H.E = 3\) by \Cref{lem:isotropic.elements.in.10.sequence}. We observe that \(h^2(H-E) = h^0(E - H+K_X) = 0\) because \(H.(E-H-K_X) = -7 < 0\), and that \((H-E)^2 = 4\). By Riemann--Roch, \(h^0(H-E) \geq 3\).
The \((-2)\)-curve \(R\) is a fixed component of \(|H-E|\), so \(H - E - R\) is effective and \(h^0(H-E-R) \geq 3\) as well. By \Cref{lem:Igors.trick}, we have \(H.(H-E-R) \geq 2\Phi(H) = 6\), whence \(H.R \leq 1\). Since \(H\) is nef, either \(H.R = 0\) or \(H.R = 1\).

Assume that \(H.R=1\). By \Cref{lem:H.R=1.or.2}, we have \(R.E \leq 1\), which implies that \((H-E).R \geq 1-1 = 0\), a contradiction. Therefore, \(H.R = 0\), i.e., \(R\) is a \((-2)\)-curve in the \(10\)-sequence. 
It follows from \(E.R = -(H-E).R > 0\) that \(R = R_{n+1}\).

When \(H - E\) is nef, it holds that also \(h^1(H - E) = 0\) by \cite[Theorem~2.1.15]{CossecDolgachevLiedtke}. Thus, \(h^0(H - E) = 3\) by Riemann--Roch.
\end{proof}

\begin{definition} \label{def:neg.def}
An effective divisor \(C\) is \emph{negative definite} if \(D^2 < 0\) for every effective divisor \(D \neq 0\) such that \(C - D\) is effective.
\end{definition}

\begin{remark} \label{rk:neg.def}
Any negative definite divisor \(C\) is \emph{rigid}, i.e., \(h^0(C) = 1\), and its components are smooth rational curves. Indeed, if either of the two conditions were violated, we could use \Cref{lem:reducibility} to write \(C = N + D\) for \(D\) effective and \(N \ne 0\) with \(N^2 \ge 0\), contradicting \Cref{def:neg.def}.

Note, however, that negative definite divisors are not necessarily nodal cycles; in other words, their support might not be a Dynkin diagram. For instance, the effective divisor pictured in \eqref{eq:case.E8.extra.special} is negative definite, but its support spans the whole hyperbolic lattice \(\Num(X)\).
\end{remark}

\begin{lemma} \label{lem:negativedefinite} 
Let \(C\) be an effective divisor on a classical Enriques surface \(X\). If \(C\) is negative definite, then \(|C+K_X|=\emptyset\).
\end{lemma}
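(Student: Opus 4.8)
The plan is to argue by contradiction. Suppose \(|C+K_X|\neq\emptyset\) and fix an effective divisor \(D\in|C+K_X|\), so \(D\sim C+K_X\). Since \(K_X\) is numerically trivial on an Enriques surface, \(D\equiv C\); in particular \(C\cdot D=C^2\).

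The key step is to remove the part common to \(C\) and \(D\). Let \(G\) be the componentwise minimum of \(C\) and \(D\) and put \(C_0\coloneqq C-G\) and \(D_0\coloneqq D-G\), so that \(C_0,D_0\) are effective with no common component and \(D_0\sim C_0+K_X\) (hence \(D_0\equiv C_0\)). Now distinguish two cases. If \(C_0=0\), then \(C\le D\), so \(D-C\) is an effective divisor linearly equivalent to \(K_X\); this is impossible since \(X\) is classical. If \(C_0\neq 0\), first observe that negative definiteness (\Cref{def:neg.def}) is inherited by nonzero effective subdivisors: if \(D'\neq 0\) is effective with \(C_0-D'\) effective, then \(C-D'=(C-C_0)+(C_0-D')\) is effective, so \((D')^2<0\). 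Thus \(C_0\) is negative definite and, applying this with \(D'=C_0\), we get \(C_0^2<0\). On the other hand \(C_0\) and \(D_0\) are effective without a common component, so \(C_0\cdot D_0\ge 0\); but \(D_0\equiv C_0\) forces \(C_0\cdot D_0=C_0^2<0\), a contradiction. In both cases we are done.

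There is no deep obstacle here: the only ingredients are \(K_X\equiv 0\), the fact that \(|K_X|=\emptyset\) on a classical Enriques surface, that two effective divisors without a common component meet non-negatively, and the (immediate) inheritance of negative definiteness by subdivisors. The one point that needs care is organizing the case split so that classicality is invoked exactly when \(C\le D\) — so that \(D-C\) exhibits a section of \(K_X\) — while the intersection-theoretic contradiction is applied to the coprime leftovers \(C_0,D_0\) and not to \(C\) and \(D\) themselves, since \(C\) and \(D\) may well share components (though \(C\neq D\), because \(K_X\not\sim 0\)).
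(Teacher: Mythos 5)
Your proof is correct, but it takes a different route from the paper's. The paper argues by induction on the number of components of \(C\): since \(C^2<0\), the divisor \(C\) is not nef, so there is a \((-2)\)-curve \(R\) with \(C.R<0\); any \(C'\in|C+K_X|\) then also satisfies \(C'.R<0\), so \(R\) is a common component of \(C\) and \(C'\), and one passes to the still negative definite divisor \(C-R\) and to \(C'-R\in|C-R+K_X|\), contradicting the induction hypothesis (the base case being \(|K_X|=\emptyset\)). Your argument removes the entire common part \(G\) in one step and then concludes non-inductively: classicality is invoked exactly when \(C\le D\), where \(D-C\) would be an effective divisor linearly equivalent to \(K_X\), while in the remaining case the numerical contradiction \(0\le C_0\cdot D_0=C_0^2<0\) does the work, using that \(C_0\equiv D_0\) are effective without common components and that \(C_0\) is a nonzero effective subdivisor of the negative definite divisor \(C\). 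Both proofs rest on the same idea of stripping common components, but yours avoids the induction and also does not need the observation (from \Cref{rk:neg.def}) that the components of \(C\) are smooth rational curves, whereas the paper's peeling argument uses it to produce the \((-2)\)-curve \(R\). Either version is complete and of comparable length.
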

\begin{proof}
By \Cref{rk:neg.def}, the linear system \(|C|\) contains a unique divisor, namely \(C\) itself.
We prove the statement by induction on the number \(n\) of components of \(C\). If \(n = 0\), the claim is clear, since \(K_X \neq 0\).

For the induction step, we may assume that \(C \neq 0\), hence \(C^2 < 0\). In particular, \(C\) is not nef, and we can choose a \((-2)\)-curve \(R\) on~\(X\) with \(C.R < 0\). If \(C' \in |C + K_X|\), then \(C'.R <0\), so \(R\) is a component of both \(C\) and \(C'\).
Now, \(C-R\) is negative definite and \(C' - R \in |C + K_X - R|\), a contradiction to the induction hypothesis.
\end{proof}

\begin{proposition} \label{prop:H-2F.is.negative.definite}
Let \(H\) be a Fano polarization, \(F\) a half-fiber appearing in its associated \(10\)-sequence and \(T\) its tail. If \(H-2(F+T)\) is numerically equivalent to an effective divisor \(C\), then \(C\) is negative definite.
\end{proposition}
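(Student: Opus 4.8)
The plan is to set $E \coloneqq F + T$, so that $E$ is the member of the $10$-sequence associated to~$H$ obtained from the half-fiber~$F$ by adjoining its entire tail~$T$, and to work with the auxiliary divisor $L \coloneqq H - E$. By \Cref{prop:nefness.H-F}, $L$ is big and nef with $L^2 = 4$ and $h^0(L) = 3$. Since $E^2 = 0$ and $H.E = 3$ by \Cref{lem:isotropic.elements.in.10.sequence}, one computes that $C \equiv H - 2E = L - E$ satisfies $C^2 = -2$, $H.C = 4$ and $L.C = 1$. Let now $D$ be an arbitrary effective divisor with $D \neq 0$ and $C - D$ effective; I must show that $D^2 < 0$. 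Because $L$ is nef and both $D$ and $C - D$ are effective with $L.C = 1$, we have $L.D \in \{0, 1\}$. If $L.D = 0$, then $D \in L^\perp$, which is negative definite by the Hodge index theorem since $L^2 > 0$; as $D$ is a nonzero effective divisor it is not numerically trivial, so $D^2 < 0$.

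It remains to handle the case $L.D = 1$, which I treat by contradiction, assuming $D^2 \geq 0$. First, $D^2 \leq 0$: otherwise $D^2 \geq 2$, and Riemann--Roch together with Serre duality forces that either $h^0(D) \geq 2$ (contradicting $L.D = 1 < 2\Phi(L)$ via \Cref{lem:Igors.trick}(2) applied to the big and nef divisor~$L$) or $h^0(K_X - D) \geq 1$ (impossible, since then $-D \equiv K_X - D$ would be numerically equivalent to an effective divisor, while $D$ itself is effective and nonzero). As $\Num(X)$ is an even lattice, $D^2 = 0$. By \Cref{lem:reducibility} and \Cref{lem: bigandnefalwayscontainshalffiber}(1), we may write $D \equiv nF'' + N$ with $F''$ a half-fiber, $n \geq 1$, and $N$ an effective sum of $(-2)$-curves. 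Since $L.F'' \geq 1$ (again by the Hodge index theorem), $n \geq 2$ would force $L.D \geq nL.F'' \geq 2$, a contradiction; hence $n = 1$, and nefness of~$L$ then gives $L.F'' = 1$. Finally, $3 = \Phi(H) \leq H.F'' \leq H.D \leq H.C = 4$, so $H.F'' \in \{3, 4\}$.

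If $H.F'' = 3$, then by \Cref{lem:isotropic.elements.in.10.sequence}(2) the effective isotropic divisor $F''$ is numerically equivalent to some member $E_i$ of the $10$-sequence associated to~$H$; since $E$ is also a member of that sequence, $E.F'' = E.E_i \leq 1$, contradicting $E.F'' = H.F'' - L.F'' = 2$. Therefore $H.F'' = 4$, and hence $E.F'' = 3$. But then the class $H - 2E - F''$ lies in the negative definite lattice~$H^\perp$, because $H.(H - 2E - F'') = 0$, while a direct computation gives $(H - 2E - F'')^2 = 2 > 0$, a contradiction. So the case $L.D = 1$ with $D^2 \geq 0$ cannot occur, and therefore $D^2 < 0$ in every case, which is exactly what it means for $C$ to be negative definite.

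I expect the case $L.D = 1$ to be the main obstacle. The issue is that $D$ could a priori be, numerically, a half-fiber (possibly together with a few $(-2)$-curves) of positive degree against~$H$, and excluding this requires playing the numerical constraints of the two big and nef divisors $H$ and $L = H - E$ off against one another, together with the elementary fact that distinct members of a $10$-sequence intersect in exactly~$1$. The clean identity $(H - 2E - F'')^2 = 2$ against $H.(H - 2E - F'') = 0$ is what ultimately closes the argument.
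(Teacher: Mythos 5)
Your proof is correct, but it follows a genuinely different and considerably longer route than the paper's. The paper argues in one stroke: if \(C\) were not negative definite, \Cref{lem:reducibility} produces a decomposition \(C \equiv N + D\) with \(N \neq 0\) nef and \(D\) effective; then \((H-E).N \leq (H-E).C = 1\) and \(E.N = (H-E).N - C.N \leq (H-E).N \leq 1\) (using \(C.N = N^2 + D.N \geq 0\)), so \(H.N \leq 2 < 3 = \Phi(H)\), contradicting \Cref{lem:Igors.trick}(1). You instead split on \(L.D \in \{0,1\}\) for \(L = H - E\), dispatch \(L.D=0\) by Hodge index, rule out \(D^2 \geq 2\) cohomologically via Riemann--Roch and \Cref{lem:Igors.trick}(2) applied to \(L\), and then, after extracting a half-fiber \(F''\) from \(D\), close the two sub-cases \(H.F'' \in \{3,4\}\) using the intersection pattern of the \(10\)-sequence (\Cref{lem:isotropic.elements.in.10.sequence}) and a lattice computation in \(H^\perp\), respectively. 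Every step checks out, but note that your final sub-case \(H.F''=4\) can be short-circuited: since \(F''\) is nef and \(C - F''\) is numerically equivalent to an effective divisor, \(C.F'' \geq 0\), whence \(E.F'' \leq L.F'' = 1\) and \(H.F'' \leq 2 < \Phi(H)\) --- which is exactly the paper's uniform argument. What your version buys is independence from the cohomological characterization in \Cref{lem:isotropic.elements.in.10.sequence} only in the last sub-case; what it costs is the extra input of Riemann--Roch and \(\Phi(L)\), which the paper's purely numerical argument avoids entirely.
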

\begin{proof}
Assume by contradiction that \(C\) is not negative definite, so that, by \Cref{lem:reducibility} applied to a subdivisor of non-negative square, there exists a decomposition \(C=N+D\) with \(N\ne 0\) nef and \(D\) effective. Let \(E = F + T\). 

By \Cref{prop:nefness.H-F}, \(H - E\) is nef.
We have \((H - E).N \leq (H-E).C = (H-E).(H-2E) = 1\), which also implies \(E.N = (H-E - C).N \leq (H-E).N \leq 1\). Combining the two inequalities, we obtain \(H.N = (H - E).N+ E.N \le 2\). 

However, by \Cref{lem:Igors.trick}, \(H.N \geq \Phi(H) = 3\). This contradiction shows that \(C\) must be negative definite.
\end{proof}

\section{Reye bundles} \label{sec:Reye.bundles}

Throughout this section, we fix a classical Enriques surface \(X\), that is, an Enriques surface such that the canonical class \(K_X\) is not \(0\), and a Fano polarization \(H\) on~\(X\). We keep denoting by \(F_i\) and \(T_i\) the half-fibers appearing in the \(10\)-sequence associated to~\(H\) and their tails, respectively. 

The complete linear system \(|H|\) induces a morphism \(\varphi_H \colon X \to \IP^5\) that maps \(X\) birationally onto a normal surface of degree~\(10\) with at worst rational double points (see, e.g., \cite[Theorem~2.4.16]{CossecDolgachevLiedtke}).

\begin{definition} \label{def:Reye.polarization}
A Fano polarization \(H\) is a \emph{Fano--Reye polarization} if the image of the induced morphism \(\varphi_H \colon X \to \IP^5\) is contained in a smooth quadric hypersurface. The Enriques surface \(X\) is called a \emph{Reye congruence} if \(X\) admits a Fano--Reye polarization.
\end{definition}

The aim of this section, which we achieve in~\Cref{thm:main_criterion}, is to find a criterion for \(H\) to be a Fano--Reye polarization.

\begin{definition} \label{def:Reye.bundle}
A \emph{Reye bundle} for \(H\) is a locally free sheaf \(\cV\) of rank~\(2\) on~\(X\) which fits into a non-split exact sequence of the form 
\begin{equation} \label{eq:Reye.bundle}
    0 \longrightarrow \cO_X(F_i + T_i) \longrightarrow \cV \longrightarrow \cO_X(H - F_i - T_i) \longrightarrow 0
\end{equation}
for some half-fiber \(F_i\) appearing with tail \(T_i\) in \(10\)-sequence associated to~\(H\).
\end{definition}

\begin{lemma} \label{lem:existence.Reye.bundle}
A necessary and sufficient condition for the existence of a Reye bundle~\(\mathcal{V}\) for~\(H\) is that
\[
    |H - 2(F_i + T_i) + K_X| \neq \emptyset.
\]
for some \(i\). Moreover, if \(\cV\) exists, then it is uniquely determined by the fact that it fits into a non-split short exact sequence as in \eqref{eq:Reye.bundle}.
\end{lemma}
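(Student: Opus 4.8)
The plan is to reduce the statement to the computation of a single cohomology group. Fix an index $i$ and abbreviate $E_i = F_i + T_i$; by \Cref{lem:isotropic.elements.in.10.sequence} we have $H.E_i = 3$ and $E_i^2 = 0$. Since $\cO_X(H - E_i)$ is locally free, the local-to-global spectral sequence for $\mathcal{E}xt$ identifies the equivalence classes of extensions of $\cO_X(H - E_i)$ by $\cO_X(E_i)$ with
\[
  \mathrm{Ext}^1_{\cO_X}\bigl(\cO_X(H - E_i),\, \cO_X(E_i)\bigr) \;\cong\; H^1\bigl(X,\, \cO_X(2 E_i - H)\bigr),
\]
and, again because $\cO_X(H - E_i)$ is locally free (so that the sequence splits locally), every such extension $\cV$ is automatically a locally free sheaf of rank $2$. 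Hence a non-split short exact sequence as in \eqref{eq:Reye.bundle} for the index $i$ exists if and only if $H^1(X, \cO_X(2 E_i - H)) \neq 0$, and if this group is $1$-dimensional there is a unique such $\cV$ up to isomorphism.

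Next I would compute $h^1(X, \cO_X(2E_i - H))$. By Serre duality it equals $h^1(X, \cO_X(D))$ for $D \coloneqq H - 2E_i + K_X$. Now $D^2 = (H - 2E_i)^2 = H^2 - 4H.E_i + 4E_i^2 = -2$, so Riemann--Roch on the Enriques surface (using $\chi(\cO_X) = 1$ and $D.K_X = 0$) gives $\chi(\cO_X(D)) = \frac{1}{2}D^2 + 1 = 0$. Moreover $h^2(\cO_X(D)) = h^0(\cO_X(K_X - D)) = h^0(\cO_X(2E_i - H)) = 0$, because $H$ is nef and $H.(2E_i - H) = -4 < 0$, so $2E_i - H$ is not equivalent to an effective divisor. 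Therefore $h^1(\cO_X(D)) = h^0(\cO_X(D)) = h^0\bigl(\cO_X(H - 2(F_i + T_i) + K_X)\bigr)$, which proves the stated criterion: a Reye bundle for $H$ exists if and only if $|H - 2(F_i + T_i) + K_X| \neq \emptyset$ for some $i$.

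It remains to prove uniqueness. For a fixed admissible $i$ it suffices to check $h^0\bigl(\cO_X(H - 2(F_i + T_i) + K_X)\bigr) \leq 1$: any effective $C$ in this linear system is numerically equivalent to $H - 2(F_i + T_i)$, hence negative definite by \Cref{prop:H-2F.is.negative.definite}, hence rigid by \Cref{rk:neg.def}, so $h^0(\cO_X(C)) = 1$. Combined with the previous paragraph this yields $\dim \mathrm{Ext}^1_{\cO_X}(\cO_X(H - E_i), \cO_X(E_i)) \leq 1$, so the non-split extension for the index $i$ is unique up to scalar and the bundle $\cV_i$ it defines is unique up to isomorphism.

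To see that the Reye bundle does not depend on the choice of $i$, one argues that whenever the criterion holds for two indices $i \neq j$, the bundle $\cV_i$ fits into a sequence \eqref{eq:Reye.bundle} also for $j$: twisting the defining sequence of $\cV_i$ by $\cO_X(-E_j)$ and passing to cohomology produces a nonzero section of $\cV_i(-E_j)$; since $c_2(\cV_i(-E_j)) = c_2(\cV_i) - H.E_j + E_j^2 = 0$, this section defines an inclusion $\cO_X(E_j) \hookrightarrow \cV_i$ with line-bundle quotient $\cO_X(H - E_j)$, which is still non-split, whence $\cV_i \cong \cV_j$ by the uniqueness just shown. The Riemann--Roch bookkeeping of the second paragraph together with \Cref{prop:H-2F.is.negative.definite} and \Cref{rk:neg.def} is routine; the step I expect to be the genuine obstacle is this last one — producing the nonzero section of $\cV_i(-E_j)$ and excluding a divisorial zero locus for it, i.e. controlling the obstruction living in $H^1(\cO_X(E_i - E_j))$ to lifting sections of $\cO_X(H - E_i - E_j)$ — which may require analysing the $(-2)$-class $E_i - E_j$ and the mutual position of the tails of $F_i$ and $F_j$ in the $10$-sequence, or, alternatively, a direct argument that the numerical class $H - 2(F_i + T_i)$ is effective for at most one index $i$.
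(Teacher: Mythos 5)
Your proof is correct and follows essentially the same route as the paper: identify extensions with $\mathrm{Ext}^1(\cO_X(H-E_i),\cO_X(E_i)) \cong H^1(X,\cO_X(2E_i-H)) \cong H^1(X,\cO_X(C))^\vee$ for $C = H-2(F_i+T_i)+K_X$, kill $h^2(C)$ using nefness of $H$, and deduce $h^1(C)=h^0(C)$ from Riemann--Roch since $C^2=-2$. The only substantive difference is the rigidity step: the paper obtains $h^0(C)\le 1$ directly from \Cref{lem:Igors.trick}(2), since $H.C=4<2\Phi(H)=6$, whereas you route through \Cref{prop:H-2F.is.negative.definite} and \Cref{rk:neg.def}; both are available at this point and both work. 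Your final paragraph on independence of the index $i$ is not needed for this lemma (the paper defers that to \Cref{prop:Reye.bundle.properties}), and, as you yourself flag, it is incomplete as written: producing a nowhere-obstructed section of $\cV_i(-E_j)$ and passing from $c_2(\cV_i(-E_j))=0$ to a line-bundle quotient $\cO_X(H-E_j)$ requires the saturation and negative-definiteness analysis that the paper carries out in the proof of \Cref{prop:Reye.bundle.F_i'}.
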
 
\begin{proof}
If we denote the divisor \(H - 2(F_i + T_i) + K_X\) by \(C\), then we have that
\[
    \Ext^1(\cO_X(H - F_i - T_i), \cO_X(F_i + T_i)) \cong H^1(X, \cO_X(C))^\vee.
\]
Thus, the existence of the non-split exact sequence in \eqref{eq:Reye.bundle} is equivalent to \(h^1(C) \neq 0\). We have \(h^2(C) = h^0(K_X - C) = 0\) because \(H.(K_X - C) = -4 < 0\). Since \(C^2 = -2\), Riemann--Roch implies that \(h^0(C) = h^1(C)\). 
In particular, \(h^1(C) \neq 0\) if and only if \(|C| \neq \emptyset\).

By \Cref{lem:Igors.trick}, \(H.C = 4 < 2\Phi(H) = 6\) implies that \(h^0(C) \leq 1\), from which we infer the last statement.
\end{proof} 

\begin{lemma} \label{lem:h0.Reye.bundle.4}
A Reye bundle \(\cV\) for \(H\) satisfies \(c_1(\cV) = H\), \(c_2(\cV) = 3\) and \(h^0(X, \cV) = 4\).
\end{lemma}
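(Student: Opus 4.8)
The plan is to read off all three invariants directly from the defining sequence \eqref{eq:Reye.bundle}. Set $E \coloneqq F_i + T_i$. The point to record first is that, since $T_i$ is the \emph{full} tail of $F_i$, the divisor $E$ is the last member of the block of the $10$-sequence associated to $F_i$; in particular $E$ appears in the $10$-sequence associated to $H$, so that both \Cref{lem:isotropic.elements.in.10.sequence} and \Cref{prop:nefness.H-F} apply to it. Concretely, this gives $E^2 = 0$, $H.E = 3$, $h^0(\cO_X(E)) = 1$, $h^1(\cO_X(E)) = 0$, and, because $n = r_i$ is the length of the tail required by \Cref{prop:nefness.H-F}, that $H - E$ is nef with $h^0(\cO_X(H-E)) = 3$.

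For $c_1$ and $c_2$ I would use the multiplicativity of the total Chern class in a short exact sequence: from \eqref{eq:Reye.bundle} we get $c(\cV) = c(\cO_X(E)) \, c(\cO_X(H-E))$. Comparing terms in degrees $1$ and $2$ yields $c_1(\cV) = E + (H-E) = H$ and $c_2(\cV) = E.(H-E) = H.E - E^2 = 3 - 0 = 3$.

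For $h^0$ I would take the long exact cohomology sequence of \eqref{eq:Reye.bundle}:
\[
0 \longrightarrow H^0(\cO_X(E)) \longrightarrow H^0(\cV) \longrightarrow H^0(\cO_X(H-E)) \longrightarrow H^1(\cO_X(E)).
\]
Since $h^1(\cO_X(E)) = 0$, the last arrow is zero, and therefore $h^0(\cV) = h^0(\cO_X(E)) + h^0(\cO_X(H-E)) = 1 + 3 = 4$.

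I do not expect a genuine obstacle here: the statement is bookkeeping on top of \Cref{lem:isotropic.elements.in.10.sequence} and \Cref{prop:nefness.H-F}. The only matters needing a word of care are (a) the identification of $E = F_i + T_i$ with a member of the $10$-sequence, so that those two results are applicable, and (b) the observation that the non-splitness of \eqref{eq:Reye.bundle} plays no role in the computation of $h^0$, the connecting homomorphism being forced to vanish by $H^1(\cO_X(E)) = 0$ rather than by any splitting of the sequence.
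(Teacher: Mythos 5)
Your proof is correct and follows essentially the same route as the paper's: both read off $c_1$ and $c_2$ from the Whitney formula applied to \eqref{eq:Reye.bundle}, and both compute $h^0(\cV) = 1+3 = 4$ from the cohomology sequence using $h^0(F_i+T_i)=1$, $h^1(F_i+T_i)=0$ (Lemma \ref{lem:isotropic.elements.in.10.sequence}) and $h^0(H-F_i-T_i)=3$ (Proposition \ref{prop:nefness.H-F}). Your added remarks on the applicability of those two results and on the irrelevance of non-splitness are accurate but do not change the argument.
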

\begin{proof}
From the sequence in \eqref{eq:Reye.bundle}, we infer that \(c_1(\cV) = (F_i + T_i) + (H-F_i - T_i) = H\) and \(c_2(\cV) = (F_i + T_i).(H - F_i - T_i) = 3\).

By \Cref{lem:isotropic.elements.in.10.sequence}, we have that \(h^0(F_i + T_i) = 1\) and \(h^1(F_i + T_i) = 0\). 
By \Cref{prop:nefness.H-F}, \(h^0(H - F_i - T_i) = 3\).
Therefore, we deduce that \(h^0(X, \cV) = 4\) from the sequence in \eqref{eq:Reye.bundle}.
\end{proof}

On a classical Enriques surface, each genus one fibration admits exactly two half-fibers (see, e.g., \cite[Corollary 2.2.9]{CossecDolgachevLiedtke}). In the following, we denote by \(F_i'\) the other half-fiber such that \(2F_i \sim 2F_i'\). 

\begin{proposition} \label{prop:Reye.bundle.F_i'}
A Reye bundle \(\cV\) fitting in \eqref{eq:Reye.bundle} also fits into a non-split short exact sequence of the form
\begin{equation} \label{eq:Reye.bundle.F_i'}
    0 \longrightarrow \cO_X(F_i' + T_i) \longrightarrow \cV \longrightarrow \cO_X(H - F_i' - T_i) \longrightarrow 0.
\end{equation}
\end{proposition}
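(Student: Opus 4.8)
The plan is to produce the sequence \eqref{eq:Reye.bundle.F_i'} by the same Ext-computation that governed the existence of $\cV$ in \Cref{lem:existence.Reye.bundle}, applied now to the half-fiber $F_i'$, and then to identify the resulting rank-two bundle with $\cV$. First I would observe that since $2F_i \sim 2F_i'$ and $K_X$ is $2$-torsion, we have $F_i' \equiv F_i$ and in fact $F_i' \sim F_i + K_X$ (the only two effective divisor classes numerically equivalent to $F_i$ on a classical Enriques surface are $F_i$ and $F_i + K_X$, and they are exchanged by adding $K_X$ since $F_i \neq F_i'$). Consequently $F_i' + T_i \sim F_i + T_i + K_X$ and $H - F_i' - T_i \sim H - F_i - T_i + K_X$, and by \Cref{lem:isotropic.elements.in.10.sequence} and \Cref{prop:nefness.H-F} the twist by $K_X$ does not change the relevant cohomology, so $F_i'$ also appears (with the same tail $T_i$) in a $10$-sequence associated to $H$, as anticipated in the discussion preceding \Cref{lem:isotropic.elements.in.10.sequence}.

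Next I would repeat the Ext-identification:
\[
    \Ext^1\!\big(\cO_X(H - F_i' - T_i),\, \cO_X(F_i' + T_i)\big) \cong H^1\!\big(X, \cO_X(H - 2(F_i'+T_i))\big)^\vee,
\]
and since $H - 2(F_i' + T_i) \sim H - 2(F_i + T_i)$ (the two copies of $K_X$ cancel), this group is canonically isomorphic to the one controlling \eqref{eq:Reye.bundle}; in particular it is nonzero by hypothesis, so there exists a non-split extension $\cV'$ fitting into \eqref{eq:Reye.bundle.F_i'}. By the uniqueness clause of \Cref{lem:existence.Reye.bundle} applied to $F_i'$ (which has the same numerical data), $\cV'$ is the unique such bundle. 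It therefore suffices to show $\cV' \cong \cV$, equivalently that $\cV$ itself admits a non-split surjection onto $\cO_X(H - F_i' - T_i)$ with kernel $\cO_X(F_i' + T_i)$.

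For that final identification I would argue directly with $\cV$: by \Cref{lem:h0.Reye.bundle.4}, $\cV$ has $c_1(\cV) = H$, and I want to exhibit a nonzero section of $\cV(-(F_i'+T_i)) = \cV \otimes \cO_X(-F_i' - T_i)$, i.e. an injection $\cO_X(F_i'+T_i) \hookrightarrow \cV$ whose cokernel is torsion-free (hence, comparing first Chern classes, a line bundle, necessarily $\cO_X(H - F_i' - T_i)$). Twisting \eqref{eq:Reye.bundle} by $\cO_X(-F_i'-T_i) \sim \cO_X(-F_i - T_i + K_X)$ gives
\[
    0 \to \cO_X(K_X) \to \cV(-F_i'-T_i) \to \cO_X(H - 2(F_i+T_i) + K_X) \to 0;
\]
the last term is $\cO_X(C)$ in the notation of \Cref{lem:existence.Reye.bundle}, which has $h^0(C) = h^1(C) \neq 0$, while $h^0(K_X) = 0$ on a classical Enriques surface, so $h^0(\cV(-F_i'-T_i)) \geq 1$. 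A nonzero section of $\cV(-F_i'-T_i)$ cannot vanish on a divisor (its zero scheme, if of pure codimension one, would force $c_1$ to drop, but one checks via \Cref{lem:Igors.trick} and the values $(F_i'+T_i)^2 = 0$, $(H-F_i'-T_i)^2 = 4$ that no such effective divisor is available — this is the place to be careful), so the section vanishes in codimension $\geq 2$, giving the desired sub-line-bundle $\cO_X(F_i'+T_i) \hookrightarrow \cV$ with line-bundle quotient. The extension is non-split because $\cV$ is not decomposable — if it split as $\cO_X(F_i'+T_i) \oplus \cO_X(H-F_i'-T_i)$ it would also split over $F_i, T_i$ after twisting back, contradicting the non-splitness of \eqref{eq:Reye.bundle}. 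The main obstacle is the codimension-two vanishing step: ruling out that the section of $\cV(-F_i'-T_i)$ picks up an effective fixed divisor, which requires combining the numerical constraints from $\Phi(H) = 3$ with the structure of $F_i' + T_i$ and $H - F_i' - T_i$ coming from \Cref{prop:nefness.H-F}.
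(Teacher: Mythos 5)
Your overall route is the same as the paper's: twist \eqref{eq:Reye.bundle} by \(\cO_X(-F_i'-T_i)\), use \(F_i'-F_i\sim K_X\) and \(h^0(K_X)=h^1(K_X)=0\) to produce a section of \(\cV(-F_i'-T_i)\), and then argue that the resulting map \(\cO_X(F_i'+T_i)\hookrightarrow\cV\) has line-bundle cokernel \(\cO_X(H-F_i'-T_i)\). But the step you flag as ``the place to be careful'' is precisely where the entire content of the proposition lies, and the tools you propose there do not suffice. If the section vanishes along an effective divisor \(D\), the saturated subsheaf is \(\cO_X(F_i'+T_i+D)\) with quotient \(\cI_Z\otimes\cO_X(H-F_i'-T_i-D)\); \Cref{lem:Igors.trick} cannot rule out \(D\neq 0\), because \(D\) could be a configuration of \((-2)\)-curves (for instance a subdivisor of \(C=H-2(F_i+T_i)+K_X\)), to which the bound \(H.D\ge\Phi(H)\) simply does not apply. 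The paper's argument needs three further inputs: the identity \(c_2(\cV)=3\), which yields \(D.(C-D)=-\length(\cO_Z)\le 0\); the effectivity of \(C-D\), obtained by composing \(\cO_X(F_i'+T_i+D)\hookrightarrow\cV\) with the surjection in \eqref{eq:Reye.bundle} and using \(\Hom(\cO_X(F_i'+T_i+D),\cO_X(F_i+T_i))=0\); and, crucially, the negative definiteness of \(C\) (\Cref{prop:H-2F.is.negative.definite}), which together with the previous two facts forces \(D=0\) (the case \(D=C\) being excluded by the non-splitness of \eqref{eq:Reye.bundle}) and then \(Z=\emptyset\). None of this is in your sketch; note also that ``vanishes in codimension \(\ge 2\)'' alone does not give a line-bundle quotient --- you still need \(Z=\emptyset\), which again comes from the \(c_2\) computation.

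Two further problems. Your non-splitness argument is a non sequitur: a decomposable rank-\(2\) bundle can perfectly well sit in a non-split short exact sequence (with a different sub-line-bundle), so ``\(\cV\) decomposable contradicts the non-splitness of \eqref{eq:Reye.bundle}'' does not follow as stated. The paper instead composes the hypothetical splitting \(\cO_X(H-F_i'-T_i)\to\cV\) with the surjection of \eqref{eq:Reye.bundle}; this composition lands in \(H^0(X,\omega_X)=0\), so the splitting factors through \(\cO_X(F_i+T_i)\) and gives a nonzero section of \(\cO_X(-C)\), which is absurd since \(H.(-C)<0\). Finally, your Serre-duality identification drops the canonical twist: the extension group is dual to \(H^1(X,\cO_X(H-2(F_i'+T_i)+K_X))\cong H^1(X,\cO_X(C))\), not to \(H^1(X,\cO_X(H-2(F_i'+T_i)))\cong H^1(X,\cO_X(C+K_X))\) --- and the latter actually vanishes by \Cref{lem:negativedefinite} and Riemann--Roch, so as literally written your existence argument for \(\cV'\) concludes from a zero group. (That detour through \(\cV'\) is in any case redundant: the statement to be proved is exactly that \(\cV\) itself fits into such a sequence.)
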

\begin{proof}
By \Cref{lem:existence.Reye.bundle}, the divisor \(H - 2(F_i + T_i) + K_X\) is linearly equivalent to an effective divisor~\(C\). Tensoring the sequence in \eqref{eq:Reye.bundle} with \(\cO_X(-F_i' - T_i)\), we obtain
\[
    0 \longrightarrow \cO_X(K_X) \longrightarrow \cV(-F_i' - T_i) \longrightarrow \cO_X(C) \longrightarrow 0.
\]
As \(h^0(K_X) = h^1(K_X) = 0\), we see that \(h^0(X,\cV(-F_i' - T_i)) \neq 0 \), which implies that there is a non-zero morphism \(\cO_X(F_i' + T_i) \rightarrow \cV\). 

We consider the induced saturated short exact sequence
\begin{equation} \label{eq:saturated.sequence}
    0 \longrightarrow \cO_X(F_i'+ T_i + D) \longrightarrow \cV \longrightarrow \cI_Z \otimes \cO_X(H - F_i' - T_i - D) \longrightarrow 0,
\end{equation}
which is constructed as follows.
If \(\pi\colon \cV \to \cV/ \cO_X(F_i' + T_i)\) denotes the induced quotient map,
then 
\(
    \pi^{-1}(\Tors(\cV/\cO_X(F_i' + T_i))) \cong \cO_X(F_i' + T_i + D)
\)
for some effective divisor \(D\).
The quotient \(\cV/\cO_X(F_i' + T_i + D)\) is a torsion-free sheaf of rank~\(1\), hence of the form \(\cI_Z \otimes \cO_X(D')\) for some closed subscheme \(Z\) of dimension \(0\) and some divisor~\(D'\).
From \(c_1(\cV) = H\) (\Cref{lem:h0.Reye.bundle.4}) and \(c_1(\cI_Z) = 0\), we deduce that \(D' \sim H - F_i' - T_i - D\). 

Computing \(c_2(\cV)\) using the sequence in \eqref{eq:saturated.sequence}, we obtain 
\begin{align*}
    c_2(\cV) & = (F_i' + T_i + D).(H - F_i' - T_i  - D) + \length(\cO_Z) = \\
        & = 3 + D.(H - 2F_i - 2T_i - D) + \length(\cO_Z).
\end{align*}
Since \(c_2(\cV) = 3\) by \Cref{lem:h0.Reye.bundle.4}, we infer by comparison that
\begin{equation} \label{eq:comparison.c2(V)}
    D.(C - D) = -\length(\cO_Z) \leq 0.
\end{equation}

Observe that \[\Hom(\cO_X(F_i' + T_i + D),\cO_X(F_i + T_i)) \cong H^0(X, \cO_X(F_i - F_i' - D))=0,\] since \(h^0(F_i - F_i' - D) \leq h^0(F_i - F_i') = h^0(K_X) = 0\). 
Hence, the composition of the two maps from \eqref{eq:Reye.bundle} and \eqref{eq:saturated.sequence}
\[
    \cO_X(F_i' + T_i + D) \longhookrightarrow \cV \longrightarrow \cO_X(H-F_i-T_i)
\]
cannot be zero, otherwise we would have a non-zero morphism from \(\cO_X(F_i' + T_i +D)\) to \(\Ker(\cV \rightarrow \cO_X(H - F_i - T_i)) = \cO_X(F_i + T_i)\). 
Thus,
\[
    H^0(X,\cO_X(C-D)) \cong \Hom(\cO_X(F_i' + T_i +D), \cO_X(H - F_i - T_i)) \neq 0,
\]
i.e., \(C-D\) is linearly equivalent to an effective divisor. Since \(C\) is rigid and \(D\) is effective, \(C - D\) is in fact effective.

We claim that \(D = 0\), using the fact that \(C\) is a negative definite divisor by \Cref{prop:H-2F.is.negative.definite}. 
If we had \(C = D\), then \eqref{eq:comparison.c2(V)} would imply \(\length(\cO_Z) = 0\), i.e., \(Z = \emptyset\), and the saturated sequence in \eqref{eq:saturated.sequence} would yield a non-zero morphism \(\cV \to \cO_X(F_i + T_i)\), contradicting the fact that the sequence in \eqref{eq:Reye.bundle} does not split. 
Thus, it holds that \(C \neq D\); in particular, \((C-D)^2 \leq -2\) by negative definiteness of~\(C\). Using \(C^2 = -2\) and \eqref{eq:comparison.c2(V)}, we obtain \(D^2 \geq  D^2 + 2D.(C-D) = -(C-D)^2-2 \geq 0\). Therefore, \(D = 0\) again by negative definiteness of~\(C\).

Now that we know \(D = 0\), \eqref{eq:comparison.c2(V)} also shows that \(\length(\cO_Z) = 0\), i.e., \(Z = \emptyset\); hence, the Reye bundle \(\cV\) fits into a short exact sequence as in \eqref{eq:Reye.bundle.F_i'}.
It remains to show that this sequence does not split. 

Seeking a contradiction, assume that it does. Then, the splitting gives a non-zero morphism
\(\varphi\colon \cO_X(H - F_i' - T_i) \to \cV\), whose composition with the morphism \(\cV \rightarrow \cO_X(H - F_i - T_i)\) from~\eqref{eq:Reye.bundle} must be zero, for otherwise it would yield a global section of the canonical sheaf \(\omega_X\). Hence, \(\varphi\) factors through a non-zero morphism to \(\Ker(\cV \rightarrow \cO_X(H - F_i - T_i)) = \cO_X(F_i + T_i)\). Thus,
\[
    H^0(X,\cO_X(-C)) \cong \Hom(\cO_X(H - F_i' - T_i),\cO_X(F_i + T_i)) \neq 0,
\]
contradicting \(H.(-C) = -4 < 0\). 
\end{proof}

An argument analogous to the one in the proof of \Cref{prop:Reye.bundle.F_i'} also shows the following.

\begin{proposition} \label{prop:Reye.bundle.properties}
There is an isomorphism \(\cV(K_X)\cong \cV\), and \(\cV\) fits into non-split exact sequences of the form
\begin{align*} 
     0 \longrightarrow \cO_X(F_i + T_i) &\longrightarrow \cV \longrightarrow \cO_X(H - F_i - T_i) \longrightarrow 0 \\
     0 \longrightarrow \cO_X(F_i' + T_i) &\longrightarrow \cV \longrightarrow \cO_X(H - F_i' - T_i) \longrightarrow 0
 \end{align*}
for all \(i\). In particular:
\begin{enumerate}
    \item It holds \(|H - 2(F_i + T_i) + K_X| \neq \emptyset\) for some \(i\) if and only if \(|H - 2(F_i + T_i) + K_X| \neq \emptyset\) for all \(i\).
    \item If \(\cV\) exists, it is uniquely determined by the Fano polarization \(H\), and we call it \emph{the} Reye bundle associated to~\(H\).
\end{enumerate}
\end{proposition}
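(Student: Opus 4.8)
The plan is to lift the argument in the proof of \Cref{prop:Reye.bundle.F_i'} from the pair \((F_i,F_i')\) to an arbitrary half-fiber \(F_j\) appearing in the \(10\)-sequence; the remaining assertions then come almost for free.

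The isomorphism \(\cV(K_X)\cong\cV\) costs nothing. On a classical Enriques surface the two half-fibers of a genus one fibration differ by \(K_X\), so \(F_i'\sim F_i+K_X\); since \(2K_X\sim 0\), tensoring \eqref{eq:Reye.bundle} by \(\omega_X\) realises \(\cV(K_X)\) as a non-split extension of the shape \eqref{eq:Reye.bundle.F_i'}. By \Cref{prop:Reye.bundle.F_i'}, \(\cV\) itself fits into such a sequence, and \(F_i'\) (with tail \(T_i\)) appears in the \(10\)-sequence because \(F_i'\equiv F_i\); hence the uniqueness clause of \Cref{lem:existence.Reye.bundle}, applied with \(F_i'\), gives \(\cV(K_X)\cong\cV\). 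Moreover, once it is known that \(\cV\) sits in a non-split sequence \(0\to\cO_X(F_j+T_j)\to\cV\to\cO_X(H-F_j-T_j)\to 0\) for every \(j\), the companion \(F_j'\)-sequences follow from \Cref{prop:Reye.bundle.F_i'} applied with \(F_j\), claim~(1) follows from \Cref{lem:existence.Reye.bundle} (we have exhibited a Reye bundle, namely \(\cV\) itself, via \(F_j\)), and claim~(2) follows since any Reye bundle \(\cV'\) for \(H\), being a non-split \(F_j\)-extension for some \(j\), is then also a non-split \(F_i\)-extension, so \(\cV'\cong\cV\) by uniqueness.

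So fix \(F_j\) in the \(10\)-sequence, set \(E_j=F_j+T_j\), and let \(E_i=F_i+T_i\) be the divisor used to define \(\cV\). First I would produce a non-zero section of \(\cV(-E_j)\). Riemann--Roch for the rank \(2\) bundle \(\cV(-E_j)\) gives \(c_1=H-2E_j\), \(c_2=0\) and \(\chi(\cV(-E_j))=1\). Using \(\cV^\vee\cong\cV(-H)\) and \(\cV(K_X)\cong\cV\), Serre duality identifies \(h^2(\cV(-E_j))\) with \(h^0(\cV(E_j-H))\), and tensoring \eqref{eq:Reye.bundle} by \(\cO_X(E_j-H)\) shows this vanishes: \(\cO_X(E_i+E_j-H)\) has negative \(H\)-degree, while \(\cO_X(E_j-E_i)\) has no section because for \(j\ne i\) an effective divisor numerically equivalent to \(E_j-E_i\) is supported on the \((-2)\)-curves of the \(10\)-sequence (as \(H.(E_j-E_i)=0\)) and hence meets \(E_i\) non-positively, contradicting \((E_j-E_i).E_i=1\). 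Thus \(h^0(\cV(-E_j))\ge 1\), equivalently \(\Hom(\cV,\cO_X(H-E_j))\ne 0\), so there is a non-zero map \(\varphi\colon\cV\to\cO_X(H-E_j)\). If \(\varphi\) is surjective, then it fits into \(0\to\cO_X(E_j)\to\cV\to\cO_X(H-E_j)\to 0\), which is the sequence we want; its non-splitting is checked as in \Cref{prop:Reye.bundle.F_i'} by playing a putative splitting against the \(F_i\)-sequence, using \(h^0(E_j-E_i)=0\) and \(h^0(E_i+E_j-H)=0\).

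The one genuine obstacle is to rule out that \(\varphi\) is non-surjective, equivalently that the section of \(\cV(-E_j)\) vanishes on a non-zero effective divisor \(D\). Saturating as in \eqref{eq:saturated.sequence} one gets \(0\to\cO_X(E_j+D)\to\cV\to\cI_Z\otimes\cO_X(H-E_j-D)\to 0\), and computing \(c_2(\cV)=3\) gives \(D.(H-2E_j-D)=-\length(\cO_Z)\le 0\); composing \(\cO_X(E_j+D)\hookrightarrow\cV\) with the quotients of the \(F_i\)- and \(F_i'\)-sequences (both compositions non-zero, since neither \(E_i-E_j\) nor \(E_i-E_j+K_X\) is effective for \(j\ne i\)) shows that \(H-E_i-E_j-D\) and \(H-E_i-E_j-D+K_X\) are both linearly equivalent to effective divisors, so by \Cref{lem:negativedefinite} the class \(H-E_i-E_j-D\) is not negative definite. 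In \Cref{prop:Reye.bundle.F_i'} the coincidence \(E_i'\equiv E_i\) let one identify the relevant twist with \(C-D\) for the negative definite \(C=H-2(F_i+T_i)+K_X\); this shortcut is no longer available, so the delicate step is to prove directly that \(H-2(F_j+T_j)\) is numerically equivalent to an effective divisor — which is essentially claim~(1). I would attack this by a reducibility analysis (via \Cref{lem:reducibility} and \Cref{lem: bigandnefalwayscontainshalffiber}) of the square-zero effective class \(H-E_i-E_j\): its nef part is numerically a half-fiber \(\Gamma\) with \(H.\Gamma\in\{3,4\}\), and matching the \((-2)\)-curves occurring in \(H-E_i-E_j-D\) against the \(10\)-sequence should pin down \(\Gamma\) and force \(D\) to be a sum of \((-2)\)-curves contracted by \(H\); then, with \(H-2(F_j+T_j)+K_X\) negative definite by \Cref{prop:H-2F.is.negative.definite} and \(D.(H-2E_j-D)\le 0\), one concludes \(D=0\) and \(\length(\cO_Z)=0\) as in \Cref{prop:Reye.bundle.F_i'}. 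I expect this last reducibility bookkeeping to be the technical heart of the proof.
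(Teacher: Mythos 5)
Your reduction of the proposition to a single claim --- that the saturated subsheaf \(\cO_X(E_j+D)\subseteq\cV\) determined by a section of \(\cV(-E_j)\) has \(D=0\) --- is correct, and the parts you actually prove are sound: the computation \(\chi(\cV(-E_j))=1\), the vanishing of \(h^2(\cV(-E_j))\) via \(h^0(E_j-E_i)=h^0(E_i+E_j-H)=0\), the \(c_2\) identity, the two non-zero compositions, the non-splitness check, and the deduction of claims (1) and (2) once the sequences exist. But the decisive step is exactly the one you leave open, and the route you sketch for it would not close the argument even if carried out: knowing that \(H-2(F_j+T_j)+K_X\) is effective and negative definite does not let you ``conclude \(D=0\) as in \Cref{prop:Reye.bundle.F_i'}'', because that proof also uses that \(C-D\) is effective, whereas your compositions only give effectivity of \(H-E_i-E_j-D\) and of \(H-E_i-E_j-D+K_X\) --- a different class from \(H-2E_j-D+K_X\) once \(E_i\not\equiv E_j\). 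So the proposal has a genuine gap at what you yourself call its technical heart.

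The gap can be closed without any reducibility bookkeeping, using only tools you already invoke. Set \(G=H-E_i-E_j-D\). Since \(|G|\neq\emptyset\) and \(|G+K_X|\neq\emptyset\), \Cref{lem:negativedefinite} shows that \(G\) is not negative definite (and \(G\not\equiv 0\)), so \(G\) dominates a non-zero effective divisor of non-negative square and \Cref{lem:Igors.trick}(1) gives \(H.G\geq\Phi(H)=3\), i.e.\ \(H.D\leq 1\). If \(D\neq 0\), then \(D^2\leq -2\) by the contrapositive of \Cref{lem:Igors.trick}(1); moreover \(E_j.D\leq H.D\), because every component of \(D\) of \(H\)-degree \(0\) lies in the \(10\)-sequence and hence meets \(E_j=F_j+T_j\) (full tail) non-positively, while the at most one component of \(H\)-degree \(1\) meets \(E_j\) at most once by \Cref{lem:H.R=1.or.2}. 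Therefore \(D.(H-2E_j-D)=H.D-2E_j.D-D^2\geq 2-H.D\geq 1>0\), contradicting your identity \(D.(H-2E_j-D)=-\length(\cO_Z)\leq 0\). Thus \(D=0\) and \(Z=\emptyset\), and the rest of your argument goes through; this is also the sense in which the proof remains ``analogous'' to \Cref{prop:Reye.bundle.F_i'}, with the negative-definiteness of \(C\) replaced by the non-negative-definiteness of \(G\) forced by \Cref{lem:negativedefinite}.
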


\begin{theorem} \label{thm:main_criterion}
On a classical Enriques surface \(X\), every Fano polarization \(H\) which admits a Reye bundle is a Fano--Reye polarization. 
In particular, if \(H - 2(F_i + T_i)\) is numerically equivalent to an effective divisor for some half-fiber \(F_i\) appearing with tail \(T_i\) in the \(10\)-sequence associated to~\(H\), then \(X\) is a Reye congruence.
\end{theorem}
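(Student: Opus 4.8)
The plan is to use the Reye bundle $\cV$ to produce a morphism onto a surface in $\IG(1,3)$ which, composed with the Plücker embedding, recovers $\varphi_H$ up to a projective automorphism of $\IP^5$; since $\IG(1,3)\subseteq\IP^5$ is a smooth quadric (the Klein quadric), this at once shows that $\varphi_H(X)$ lies on a smooth quadric. Write $V\coloneqq H^0(X,\cV)$, so that $\dim V=4$, $\det\cV\cong\cO_X(H)$ and $c_2(\cV)=3$ by \Cref{lem:h0.Reye.bundle.4}. Recall that $\IG(1,3)=\IG(2,V)$ is embedded in $\IP(\bigwedge^2 V)\cong\IP^5$ by the Plücker embedding, that $H^0(\IG(1,3),\cO(1))$ is the $6$-dimensional space $\bigwedge^2 V$, and that $\IG(1,3)$ is cut out by the Klein quadric; on the other hand, $h^0(X,\cO_X(H))=\tfrac12 H^2+1=6$, because $h^1(H)=h^2(H)=0$. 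Hence it suffices to prove: (a) $\cV$ is globally generated at the generic point of $X$; and (b) the natural multiplication map $\mu\colon\bigwedge^2 V\to H^0(X,\cO_X(H))$, $s\wedge t\mapsto s\wedge t$, is an isomorphism. Granting (a) and (b): by (a) there is a dense open $U\subseteq X$ and a morphism $\psi\colon U\to\IG(1,3)$ with $\psi^*\cO(1)\cong\cO_X(H)|_U$, and the composition $U\xrightarrow{\psi}\IG(1,3)\hookrightarrow\IP(\bigwedge^2 V)$ is the morphism attached to $\cO_X(H)|_U$ together with the linear system $\mu(\bigwedge^2 V)=H^0(X,\cO_X(H))$; by (b) this is $\varphi_H|_U$ composed with the linear isomorphism $\IP(\bigwedge^2 V)\cong\IP(H^0(X,\cO_X(H))^\vee)$ induced by $\mu$. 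Since $\varphi_H$ is a morphism and $U$ is dense, $\varphi_H(X)=\overline{\varphi_H(U)}$ is contained in the image of the Klein quadric, which is again a smooth quadric; so $H$ is a Fano--Reye polarization.

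For (a), I would argue directly from \eqref{eq:Reye.bundle}. The subsheaf $\cO_X(F_i+T_i)\hookrightarrow\cV$ provides a nonzero section $s_0\in V$, and since $h^1(\cO_X(F_i+T_i))=0$ by \Cref{lem:isotropic.elements.in.10.sequence}, the map $V\to H^0(X,\cO_X(H-F_i-T_i))$ is surjective onto a nonzero space. If the image of $V\otimes\cO_X\to\cV$ had rank $1$, then, composing with $\cV\to\cO_X(H-F_i-T_i)$, this image would meet $\cO_X(F_i+T_i)$ in a torsion subsheaf, hence in $0$; but $s_0$ lies in both, so $s_0=0$, a contradiction. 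Therefore the image has rank $2$, which gives (a) and the rational map $\psi$.

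Step (b) is the heart of the argument; as both sides of $\mu$ have dimension $6$, it is enough to prove $\mu$ injective. Let $0\ne\omega\in\ker\mu\subseteq\bigwedge^2 V$. If $\omega=s\wedge t$ is decomposable, then $s$ and $t$ span a rank-$1$ subsheaf of $\cV$ whose saturation is a line bundle $\cO_X(A)\hookrightarrow\cV$ with $h^0(\cO_X(A))\ge 2$ and $\cV/\cO_X(A)\cong\cI_Z\otimes\cO_X(B)$, where $A+B\equiv H$ and $A.B+\length(\cO_Z)=c_2(\cV)=3$. Here I would run a short numerical analysis using $\Phi(H)=3$ and \Cref{lem:Igors.trick}: from $h^0(\cO_X(A))\ge 2$ one gets $H.A\ge 2\Phi(H)=6$, hence $H.B\le 4$; as $B$ is nef by (a), $h^0(B)\ge 2$ would force $H.B\ge 6$, so $h^0(B)\le 1$ and thus $h^0(\cO_X(A))\ge 3$; ruling out $B^2>0$ leaves $B\equiv 0$ — impossible, since it would give $h^0(\cV)\ge h^0(\cO_X(H))=6$ — or $B\equiv F_l$ for a half-fiber $F_l$ of the $10$-sequence (because $A.B=H.F_l\ge\Phi(H)=3$ and $A.B\le 3$), with $\length(\cO_Z)=0$; in the latter case $\cV$ fits into $0\to\cO_X(H-F_l)\to\cV\to\cO_X(F_l)\to 0$, and comparing with the subsheaf $\cO_X(F_l+T_l)\hookrightarrow\cV$ from \Cref{prop:Reye.bundle.properties} forces \eqref{eq:Reye.bundle} to split (here one uses that $H-2(F_l+T_l)$ is not effective, which holds because a Reye bundle exists, so $|H-2(F_i+T_i)+K_X|\ne\emptyset$, whence \Cref{prop:H-2F.is.negative.definite} and \Cref{lem:negativedefinite} apply), a contradiction. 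Thus $\ker\mu$ contains no nonzero decomposable vector; but the decomposable vectors form the Klein quadric $\IG(2,V)\subseteq\IP^5$ and every line of $\IP^5$ meets a quadric $4$-fold, so $\dim\ker\mu\le 1$. \textbf{The remaining step, which I expect to be the main obstacle, is to exclude the case that $\ker\mu$ is a line spanned by a vector $\omega$ of rank $4$.} Such an $\omega$ equips $V$ with a symplectic form, and combining this with (a) the relation $\mu(\omega)=0$ should produce a nonzero section of $\mathcal{E}nd_0(\cV)\otimes\cO_X(H)\cong\mathrm{Sym}^2\cV$, realizing $\cO_X(-H)$ as a subsheaf of $\mathcal{E}nd_0(\cV)$; this should contradict the structure of $\cV$, since $H-2(F_i+T_i)$ not effective gives $\Hom(\cO_X(F_i+T_i),\cO_X(H-F_i-T_i))=\Hom(\cO_X(H-F_i-T_i),\cO_X(F_i+T_i))=0$, so that $\cV$ is simple, and a simple rank-$2$ bundle should not admit such a subsheaf of $\mathcal{E}nd_0(\cV)$ with $h^0(\cO_X(-H))=0$. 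Making this last argument rigorous is the delicate point.

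Finally, the last sentence of the statement follows by combining the first assertion with \Cref{lem:existence.Reye.bundle}: if $H-2(F_i+T_i)$ is numerically equivalent to an effective divisor, then (passing if necessary to the linearly equivalent class whose linear system is nonempty, i.e.\ possibly replacing this class by its twist by $K_X$) one obtains $|H-2(F_i+T_i)+K_X|\ne\emptyset$, hence a Reye bundle for $H$, and therefore $X$ is a Reye congruence in the sense of \Cref{def:Reye.polarization}.
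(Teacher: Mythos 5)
Your reduction to the injectivity of the map \(\gamma=\mu\colon \bigwedge^2 V \to H^0(X,\cO_X(H))\), and the observation that a kernel of dimension \(\geq 2\) would contain a nonzero decomposable vector, are both sound and parallel to the paper. But the proof has a genuine gap, which you flag yourself: excluding a kernel spanned by a single vector \(\omega\) of rank \(4\). This is precisely the hard core of the theorem, and the route you sketch for it does not work. A rank-\(4\) element \(\omega\in\ker\mu\) does (in characteristic \(\neq 2\)) produce a map \(\cO_X(-H)\to \mathcal{E}nd_0(\cV)\), but this is no contradiction: by Riemann--Roch, \(\chi(\mathcal{E}nd_0(\cV)\otimes\cO_X(H))=16>0\), so nonzero maps \(\cO_X(-H)\to\mathcal{E}nd_0(\cV)\) exist in abundance for \emph{any} such \(\cV\); simplicity controls \(H^0(\mathcal{E}nd(\cV))\), not \(H^0(\mathcal{E}nd(\cV)\otimes\cO_X(H))\). (There is also a characteristic-\(2\) issue with the splitting \(\cV\otimes\cV\cong\mathrm{Sym}^2\cV\oplus\bigwedge^2\cV\), and the theorem is needed in characteristic \(2\).) In addition, your treatment of the decomposable case is incomplete: from \(A.B=H.B-B^2\leq 3\) with \(B\) effective you can only conclude \(B^2\in\{0,-2\}\), and the case \(B^2=-2\), \(H.B\leq 1\) (so \(A^2\geq 6\)) is not addressed by your dichotomy \(B\equiv 0\) or \(B\equiv F_l\).

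The paper avoids the rank-\(4\) case altogether by proving surjectivity of \(\gamma\) directly. Using the two non-split sequences for \(F_1+T_1\) and \(F_1'+T_1\) (\Cref{prop:Reye.bundle.F_i'}), it shows that both \(W=H^0(X,\cO_X(H-F_1-T_1))\) and \(W'=H^0(X,\cO_X(H-F_1'-T_1))\) lie in the image of \(\gamma\); since \(W\cap W'\subseteq H^0(X,\cO_X(C))\) is \(1\)-dimensional, the image contains the \(5\)-dimensional space \(H^0(X,\cO_X(H-T_1))\). The last dimension is gained by restricting to the final curve \(R_{1,r_1}\) of the tail: from \(h^0(X,\cV(-R_{1,r_1}))=2\) one finds sections \(t_0,t_1\) whose restrictions to \(R_{1,r_1}\) are independent, so \(\gamma(t_0\wedge t_1)\) does not vanish on \(T_1\). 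The case of an empty tail (non-degenerate \(10\)-sequence) is quoted from \cite[Proposition~3.13]{conte.verra}, which uses only the negative definiteness of \(C\) from \Cref{prop:H-2F.is.negative.definite}. Some argument of this kind, exploiting the tail and the pair of half-fibers \(F_1,F_1'\), is needed to replace your missing step; the symplectic-form idea as stated cannot close the gap.
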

\begin{proof}
By \Cref{prop:Reye.bundle.properties}, we may assume without loss of generality that \(i = 1\).
The second statement follows directly from the first and \Cref{lem:existence.Reye.bundle}: if \(H - 2(F_1 + T_1)\) is numerically equivalent to an effective divisor, then either \(H\) or \(H + K_X\) is a Fano--Reye polarization. From now on, we will assume that \(H - 2(F_1 + T_1) + K_X\) is linearly equivalent to an effective divisor~\(C\). By \Cref{prop:H-2F.is.negative.definite}, \(C\) is negative definite.

Since \(\cV\) satisfies \({\rm det}(\cV) \cong \cO_X(H)\) and \(h^0(X,\cV) = 4\) by \Cref{lem:h0.Reye.bundle.4}, we get an induced rational map \(\varphi_\cV \colon X \dashrightarrow \mathbb{G}(2,H^0(X,\cV))\) to the Grassmannian that fits into the following commutative diagram
\[
    \begin{tikzcd}
        X \arrow[rr,dashed,"\varphi_{\cV}"] \arrow[d,"\varphi_H"']     &           & \IG(2, H^0(X,\cV)) \arrow[d] \\
        \IP(H^0(X, \cO_X(H))^\vee) \arrow[rr, "\IP(\gamma^\vee)"] && \IP((\bigwedge^2 H^0(X, \cV))^\vee)
    \end{tikzcd}
\]
where \(\gamma \colon \bigwedge^2 H^0(X,\cV) \to H^0(X,\cO_X(H))\) is the natural map.

It suffices to show that \(\gamma\) is an isomorphism. Indeed, if this is the case, then the global generation of \(\cO_X(H)\) implies the global generation of \(\cV\), so \(\varphi_\cV \colon X \to \IG(2,H^0(X,\cV))\) is a morphism and the morphism \(\varphi_H \colon X \to \IP(H^0(X, \cO_X(H))^\vee)\cong \IP^5\) induced by \(|H|\) factors through the smooth quadric \(\mathbb{G}(2,H^0(X,\cV))\). In particular, \(H\) is a Fano--Reye polarization.

In the case where the \(10\)-sequence associated to~\(H\) is \(10\)-degenerate, the proof can be found in \cite[Proposition~3.13]{conte.verra}.
In the proof, the authors only use the fact that \(C\) is a negative definite divisor. Since \(F_1.C=3\) and \(H+K_X = 2F_1 + C\), their proposition implies that \(H\) is a Fano--Reye polarization. 
Hence, we may assume in the following that the \(10\)-sequence is degenerate, i.e., there is a non-trivial tail, which we may assume to be \(T_1\) by \Cref{prop:Reye.bundle.properties}. 

In the \(6\)-dimensional vector space \(H^0(X,\cO_X(H))\), we consider the subspace 
\[
    W \coloneqq H^0(X,\cO_X(H - F_1 - T_1)),
\]
which has dimension \(3\) by \Cref{prop:nefness.H-F}. We claim that \(W\) is in the image of \(\gamma\).
To see this, let \(s_0\) be a section of \(\cV\) that vanishes along \(F_1 + T_1\), and extend it to a basis \(s_0,\hdots,s_3 \in H^0(X,\cV)\). 
If \(\gamma(s_0 \wedge s) = 0\) for some \(0 \neq s \in \langle s_1,s_2,s_3 \rangle\), then the saturation of the subsheaf of \(\cV\) spanned by \(s_0\) and \(s\) is locally free of rank \(1\) and is of the form \(\cO_X(F_1 + T_1 + D)\) for some effective divisor~\(D\). 
The same argument as in the proof of \Cref{prop:Reye.bundle.F_i'}, using the sequence in \eqref{eq:Reye.bundle.F_i'} instead of the one in \eqref{eq:Reye.bundle}, shows that such a \(D\) is necessarily \(0\). 
But then, we have that \(s_0,s \in H^0(X,\cO_X(F_1 + T_1))\), which is a \(1\)-dimensional subspace of \(H^0(X,\cV)\), contradicting the linear independence of \(s_0\) and~\(s\).

The same argument shows that the subspace \(W' \coloneqq H^0(X,\cO_X(H - F_1' - T_1))\) is in the image of~\(\gamma\), hence so is \(W + W'\), which is contained in \(H^0(X, \cO_X(H - T_1))\). As \(H\) is base point free (see, e.g., \cite[Proposition 3.1.1]{CossecDolgachevLiedtke}), it holds that \(h^0(H-T_1) \leq 5\). We claim that \(W + W'\) has exactly dimension \(5\), and, thus, coincides with \(H^0(X, \cO_X(H - T_1))\).

Since \(F_1\) and \(F_1'\) are disjoint, a section of \(H\) vanishing along \(F_1 + T_1\) and on \(F_1'+T_1\) also vanishes on \(F_1+F_1'+T_1\), i.e., we have \(W \cap W' \subseteq H^0(X,\cO_X(C))\)
as subspaces of \(H^0(X,\cO_X(H))\). Moreover, \(h^0(C) = 1\) by \Cref{lem:Igors.trick}, since \(C\) is effective by assumption and \(H.C = 4\). Thus, 
\(\dim(W + W') = \dim(W) + \dim(W') - \dim(W \cap W') \geq 5\), as claimed.

Summing up, the image of \(\gamma\) contains the \(5\)-dimensional subspace \(H^0(X,\cO_X(H-T_1))\). In order to prove the surjectivity of \(\gamma\), it suffices to find two sections \(t_0,t_1\in H^0(X,\cV)\) such that the image \(\gamma(t_0\wedge t_1)\) does not vanish on \(T_1\). To this aim, consider the sequence
\[
0 \longrightarrow \cO_{X}(H-2F_1-2T_1) \longrightarrow \cO_X(H-F_1-2T_1) \longrightarrow \cO_{F_1}(H-F_1-2T_1) \longrightarrow 0.
\]
By \Cref{prop:H-2F.is.negative.definite} and \Cref{lem:negativedefinite}, we have that \(h^0(H-2F_1-2T_1) = 0\). 
By Riemann--Roch, also \(h^1(H-2F_1-2T_1) = 0\). 
Since \(T_1 \neq 0\), we have \((H-F_1-2T_1).F_1 = 1\), whence \(h^0(F_1,\cO_{F_1}(H-F_1-2T_1)) = 1\). 
We deduce that \(h^0(H-F_1-2T_1) = 1\). 

If \(r_1\) is the length of the tail \(T_1 = R_{1,1}+\ldots+R_{1,r_1}\), we claim that \(h^0(H-F_1-T_1-R_{1,r_1}) = 1\) as well. 
Indeed, \((H-F_1-T_1-R_{1,r_1}).R_{1,r_1-1}=-1\), so \(R_{1,r_1-1}\) is in the fixed part of \(|H-F_1-T_1-R_{1,r_1}|\) and, recursively, \(h^0(H-F_1-T_1-R_{1,r_1}) = h^0(H-F_1-2T_1) = 1\).

By \Cref{lem:isotropic.elements.in.10.sequence}, we have that \(h^0(F_1 + T_1 - R_{1,r_1}) = 1\) and \(h^1(F_1 + T_1 - R_{1,r_1}) = 0\).
Therefore, the sequence
\[
    0 \longrightarrow \cO_X(F_1+T_1-R_{1,r_1}) \longrightarrow \cV(-R_{1,r_1}) \longrightarrow \cO_X(H-F_1-T_1-R_{1,r_1}) \longrightarrow 0
\]
shows that \(h^0(X,\cV(-R_{1,r_1})) = 2\).
In particular \(h^0(R_{1,r_1},\cV|_{R_{1,r_1}})\ge 2\), so we can find two global sections \(t_0,t_1\) of \(\cV\) such that \(t_0|_{R_{1,r_1}},t_1|_{R_{1,r_1}}\) are linearly independent. Therefore, \(\gamma(t_0 \wedge t_1)\) is a section of \(\cO_X(H)\) that does not vanish on \(R_{1,r_1}\), hence not on \(T_1\), as desired.
\end{proof}

\begin{remark} \label{rmk:relationtoReye}
If the characteristic is not \(2\), we explain here how to connect what we call Reye congruence to the classical construction of Reye following \cite[Section 4]{conte.verra}.
Let \(\pi \colon \widetilde{X} \to X\) be the K3 cover with deck involution \(\sigma\), and let \(\mathcal{V}\) be the Reye bundle associated to \(H\). 
Since the effective divisor in \(|H - 2F_1 - 2T_1 + K_X|\) is negative definite by \Cref{prop:H-2F.is.negative.definite}, it splits under \(\pi\) into two disjoint negative definite configurations \(R^+\) and \(R^-\) with \(\sigma(R^\pm) = R^\mp\). We set \(\mathcal{L}^{\pm} \coloneqq \mathcal{O}_{\widetilde{X}}(\pi^*(F_1+T_1) + R^{\pm})\), so that \(\sigma^* \mathcal{L}^\pm \cong \mathcal{L}^{\mp}\).

One can show that \(\pi^*\cV\cong \mathcal{L}^+\oplus \mathcal{L}^-\). It holds \((\mathcal{L}^\pm)^2=4\), and the pair \((\mathcal{L}^+,\mathcal{L}^-)\) induces a morphism \(\psi \colon \widetilde{X}\to \IP^3\times \IP^3\) that fits in the following commutative diagram
\[\begin{tikzcd}
    \widetilde X \arrow{d}[swap]{\pi} \arrow{r}{\psi} & \mathbb{P}^3 \times \mathbb{P}^3 \arrow[dashed]{d}{\lambda} \\
    X \arrow{r}{\varphi_{\cV}}  & \mathbb{G}(1,3),
\end{tikzcd}\]
where the rational map \(\lambda \colon \IP^3\times \IP^3 \dashrightarrow \IG(1,3)\) sends two points of \(\IP^3\) to the line through them and the involution induced by $\sigma$ on $\psi(\widetilde{X})$ coincides with the automorphism induced by swapping the two factors of $\mathbb{P}^3 \times \mathbb{P}^3$. The image \(\psi(\widetilde{X})\) is a surface of degree \((\pi^*H)^2 = 20\) with only rational double points, and it is the complete intersection of four symmetric \((1,1)\)-divisors, corresponding to the $4$-dimensional kernel of the multiplication map $H^0(\widetilde{X},\mathcal{L}^+) \otimes H^0(\widetilde{X},\mathcal{L}^{-}) \to H^0(\widetilde{X}, \pi^* \mathcal{O}_X(H))$. Moreover, \(\psi(\widetilde{X})\) is disjoint from the diagonal \(\Delta \subseteq \IP^3\times \IP^3\), since \(\sigma \) is fixed point free and the divisors contracted by \(\psi\) do not admit fixed point free involutions, so the map \(\lambda\) is defined on \(\psi(\widetilde{X})\).

By interpreting the four \((1,1)\)-divisors as symmetric bilinear forms \(b_i\), their associated quadratic forms \(q_i\) span a web \(W = \langle q_1,\hdots,q_4\rangle\) of quadrics in~\(\IP^3\). 
Now, given a line \(\ell\) in~\(\IP^3\), a computation in bilinear algebra shows the following: there exist distinct points \(x\) and \(y\) on \(\ell\) such that \(b_i(x,y) = 0\) for all \(i\) if and only if there exists a \(2\)-dimensional subspace of \(\langle q_1,\hdots,q_4\rangle\) that vanishes along~\(\ell\). 
In other words, \(\varphi_{\mathcal{V}}(X) = \lambda(\psi(\widetilde{X}))\) is the surface \(R(W)\) associated to the web \(W\) and \(\psi(\widetilde{X})\) is the polar base locus \({\rm PB}(W)\) of the web \(W\) (see \cite[Section 7.2]{DolgachevKondoBook}).
\end{remark}

\begin{remark} \label{rmk:relatontoReye2}
The classical Reye construction has been extended to characteristic~\(2\) in \cite[Sections 7.3 and 7.6]{DolgachevKondoBook}: as in characteristic different from~\(2\), the classical Reye congruence associated to a general web \(W = \langle q_1,\hdots,q_4\rangle\) of quadrics is a classical Enriques surface \(X\). 
For general \(W\), the bilinear form \(b_q\) associated to a quadratic form \(q \in W\) by polarization is non-zero, and the pair \((b_q,q)\) determines a section of  \(\mathcal{O}_{\mathbb{T}(\mathbb{P}^3)}(2)\), where  \(\mathbb{T}(\mathbb{P}^3)\) is the completion of the tangent bundle of~\(\mathbb{P}^3\).
The polar base locus \({\rm PB}(W)\) of \(W\) is defined to be the intersection of the zero loci \(Z(b_{q_i},q_i)\).
The finite part of the Stein factorization of the natural map \(\lambda \colon {\rm PB}(W) \to \mathbb{G}(1,3)\) that sends a pair of a point \(x\) and a tangent direction \(t_x\) to the line through \(x\) with tangent direction \(t_x\) is a \(\mu_2\)-torsor over \(X\) that coincides with its canonical cover.

Therefore, in analogy with \Cref{rmk:relationtoReye}, it seems natural to expect that every classical Enriques surface with a Reye bundle \(\mathcal{V}\) in characteristic \(2\) fits into a diagram of the form
\[\begin{tikzcd}
    \widetilde X \arrow{d}[swap]{\pi} \arrow[dashed]{r}{\psi} & \mathbb{T}(\mathbb{P}^3) \arrow[dashed]{d}{\lambda} \\
    X \arrow{r}{\varphi_{\cV}}  & \mathbb{G}(1,3),
\end{tikzcd}\]
where \(\pi \colon \widetilde{X} \to X\) is the (always singular, and possibly non-normal) canonical cover, \(\varphi_{\mathcal{V}}(X)\) is the Reye congruence associated to a web~\(W\), and the closure of \(\psi(\widetilde X)\) is the polar base locus of~\(W\). It would be interesting to investigate this.
\end{remark}

\section{Fano--Reye polarizations on non-extra-special Enriques surfaces} \label{sec:non-extra-special}

In this section, we prove \Cref{thm:main.theorem.introduction} for Enriques surfaces of non-degeneracy at least \(3\). Enriques surfaces of non-degeneracy strictly smaller than \(3\) are called \emph{extra-special}, and they exist only in characteristic \(2\) (see \cite[Corollary~1.6]{MMV:extraspecial}). We will analyze extra-special Enriques surfaces in detail in the next section.

Throughout this section, \(H\) denotes a Fano polarization, and \((F_1, F_2, F_3)\) denotes a non-degenerate \(3\)-sequence, i.e., a triple of half-fibers with \(F_i.F_j = 1 - \delta_{ij}\), such that \(H.F_1 = H.F_2 = H.F_3 = 3\).
By~\Cref{lem:isotropic.elements.in.10.sequence}, the half-fiber \(F_i\) appears in the \(10\)-sequence associated to~\(H\), and we call \(T_i\) its tail, which may well be empty. In any case, the divisors \(F_i + T_i\) appear in the \(10\)-sequence; in particular, they satisfy \(H.(F_i+T_i) = 3\), \((F_i+T_i)^2 = 0\) and \((F_i+T_i).(F_j + T_j) = 1\) for \(i \neq j\).

\begin{proposition} \label{prop:nontrivial.tilt}
If \(H - F_1 - F_2 - F_3\) is not numerically equivalent to an effective divisor, then the divisor class
\begin{equation} \label{eq:hat.H}
    \hat H \coloneqq 2H - (F_1 + T_1) - (F_2 + T_2) - (F_3 + T_3)
\end{equation}
is a Fano polarization. 
Furthermore, there exists a half-fiber \(\hat F_i\) appearing in the \(10\)-sequence associated to \(\hat H\) with tail \(\hat T_i\) such that
\[
    H - (F_j + T_j) - (F_k + T_k) \sim \hat F_i + \hat T_i
\]
for every permutation \((i,j,k)\) of \((1,2,3)\).
\end{proposition}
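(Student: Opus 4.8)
The plan is to verify directly that $\hat H$ satisfies the two defining conditions of a Fano polarization, $\hat H^2 = 10$ and $\Phi(\hat H) = 3$, and to identify the half-fibers in its $10$-sequence. For the self-intersection, write $E_i \coloneqq F_i + T_i$ for brevity; these satisfy $H.E_i = 3$, $E_i^2 = 0$ and $E_i.E_j = 1$ for $i\neq j$, and $\hat H = 2H - E_1 - E_2 - E_3$. A direct expansion gives $\hat H^2 = 4H^2 - 4\sum_i H.E_i + (\sum_i E_i)^2 = 40 - 36 + (0+0+0+2+2+2) = 10$. For nefness, I would first observe that $\hat H = (H - E_1 - E_2) + (H - E_1 - E_3) + E_1$; by \Cref{prop:nefness.H-F}, each divisor $H - E_i - E_j$ is the sum of a half-fiber and part of a tail \emph{provided} $H - E_i$ is nef, which is exactly the hypothesis that makes the $T_i$ genuine tails — but one must be careful: \Cref{prop:nefness.H-F} gives nefness of $H - E$ for $E = F_i + T_i$ the full tail, so $H - E_i$ is nef, and then $H - E_i - E_j = (H-E_i) - E_j$ needs the extra input that it is \emph{effective} and in fact a half-fiber-plus-tail. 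This is where the hypothesis ``$H - F_1 - F_2 - F_3$ is not numerically equivalent to an effective divisor'' must enter.

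**The role of the hypothesis.**
The hypothesis should be used to pin down that $D_{jk} \coloneqq H - E_j - E_k$ is effective with $D_{jk}^2 = 0$ (note $D_{jk}^2 = H^2 - 2\sum H.E + (E_j+E_k)^2 = 10 - 12 + 2 = 0$), hence by \Cref{lem: bigandnefalwayscontainshalffiber}(1) numerically a multiple of a half-fiber; since $H.D_{jk} = 10 - 6 = 4$ and $\Phi(H) = 3$, \Cref{lem:Igors.trick}(1) forces $D_{jk}$ to be a \emph{primitive} isotropic class (a multiple $nF$ would give $H.D_{jk} = n\cdot H.F \geq 3n$, so $n = 1$). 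One then applies \Cref{lem: bigandnefalwayscontainshalffiber}(2) or \Cref{lem:reducibility} to write $D_{jk} \sim \hat F_i + \hat T_i$ for a half-fiber $\hat F_i$ and an effective (necessarily negative definite, since $D_{jk}^2 = 0$) divisor $\hat T_i$ which will become the tail. The effectivity of $D_{jk}$ should follow from Riemann--Roch once $h^2(D_{jk}) = h^0(K_X - D_{jk})= 0$ is checked (which holds since $H.(K_X - D_{jk}) = -4 < 0$) — so actually $D_{jk}$ is \emph{always} effective, and the hypothesis ``$H - F_1 - F_2 - F_3 \not\equiv$ effective'' must instead be what prevents $\hat H$ from dropping to $\Phi(\hat H) \le 2$. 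Concretely, I expect: if $\hat H$ had a half-fiber $G$ with $\hat H.G \le 2$, one would manipulate $\hat H.G = 2H.G - \sum E_i.G$ together with $H.G \ge 3$ and $E_i.G \ge 0$ to force $\sum E_i.G \ge 4$, and the extreme case $H.G = 3$, $\sum E_i.G = 4$ leads (after subtracting appropriate isotropic classes) to $H - F_1 - F_2 - F_3$ being effective, contradicting the hypothesis.

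**Assembling $\Phi(\hat H) = 3$ and the tails.**
So the structure of the argument is: (i) compute $\hat H^2 = 10$; (ii) show $\hat H$ is nef by exhibiting it as a sum of nef divisors $D_{jk} + E_1$ or similar — here I should double-check that $E_1 = F_1 + T_1$ is nef, which holds since it appears in a $10$-sequence, and that each $D_{jk}$ is nef, which follows from its being (numerically) a half-fiber once we know it is effective of square $0$ and primitive; (iii) compute $\hat H.F_i = 2\cdot 3 - (1 + 1 + 0) = 4$... wait, $\hat H.(F_i + T_i) = 2H.E_i - \sum_k E_k.E_i = 6 - 2 = 4$, so the original $E_i$ are \emph{not} the half-fibers realizing $\Phi(\hat H)$; rather the $D_{jk} \sim \hat F_i + \hat T_i$ are, and one computes $\hat H.(\hat F_i + \hat T_i) = \hat H.D_{jk}$. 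A direct expansion $\hat H.D_{jk} = (2H - E_1-E_2-E_3).(H - E_j - E_k)$ should give $3$; (iv) conclude $\Phi(\hat H) \le 3$, and prove $\Phi(\hat H) \ge 3$ by the argument sketched above, invoking the hypothesis. Finally, for the last displayed formula, \Cref{lem:isotropic.elements.in.10.sequence}(2) applied to $\hat H$ says that the isotropic effective divisor $D_{jk}$, having $\hat H.D_{jk} = 3$, appears in the $10$-sequence of $\hat H$; writing its nef representative as $\hat F_i + \hat T_i$ via \Cref{lem:reducibility} gives exactly $H - (F_j+T_j) - (F_k+T_k) \sim \hat F_i + \hat T_i$, and $\hat F_i.\hat F_j = \hat F_i.\hat F_k$ should be checked to equal $1$ so that these three do form part of a $10$-sequence.

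**Main obstacle.**
The main obstacle is the lower bound $\Phi(\hat H) \ge 3$: one must rule out every half-fiber $G$ with $\hat H.G \in \{1,2\}$, and extract from such a $G$ an effective divisor numerically equal to $H - F_1 - F_2 - F_3$. This requires a careful case analysis of the possible values of $(H.G, E_1.G, E_2.G, E_3.G)$ subject to $H.G \ge 3$, $E_i.G \ge 0$, $G^2 = 0$, and $2H.G - \sum E_i.G \le 2$; the Hodge index theorem applied to the hyperbolic lattice spanned by $H$ and the $E_i$ (or directly to $G$ and $\hat H$) should cut the possibilities down, and in each surviving case Riemann--Roch plus \Cref{lem:reducibility} should produce the forbidden effective divisor. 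A secondary subtlety is making sure the three classes $\hat F_i$ can be chosen to genuinely satisfy the $10$-sequence incidence relations (in particular $\hat F_i . \hat F_j = 1$), which is a short computation from $D_{jk}.D_{ik} = (H - E_j - E_k).(H - E_i - E_k) = 10 - 6 - 6 + (E_j + E_k).(E_i+E_k) = 10 - 12 + (1 + 0 + 1 + 1)$, that I would carry out explicitly.
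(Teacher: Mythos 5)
There is a genuine gap, and it sits exactly at the crux of the proposition: the nefness of \(\hat H\). Your plan is to write \(\hat H = (H-E_1-E_2)+(H-E_1-E_3)+E_1\) and argue that each summand is nef. But neither type of summand is nef in general. First, \(E_1 = F_1+T_1\) is \emph{not} nef whenever \(T_1 = R_{1,1}+\dots+R_{1,r_1}\neq 0\): one has \((F_1+T_1).R_{1,r_1}=-1\). The divisors appearing in a \(10\)-sequence are half-fibers plus chains of \((-2)\)-curves, and only the half-fibers themselves are nef. Second, your inference "effective, isotropic and primitive \(\Rightarrow\) numerically a half-fiber \(\Rightarrow\) nef" for \(D_{jk}=H-E_j-E_k\) is false: \Cref{lem:reducibility} only gives \(D_{jk}\sim \hat F_i+(\text{effective negative definite part})\), and that sum is again non-nef as soon as the negative definite part is non-empty --- which is precisely the situation the proposition must allow for, since its conclusion is \(D_{jk}\sim\hat F_i+\hat T_i\) with a possibly non-trivial tail \(\hat T_i\). (Note also that \Cref{lem: bigandnefalwayscontainshalffiber}(1) requires nefness as a hypothesis, so it cannot be used to establish it.) Relatedly, you have misplaced the role of the hypothesis: it is needed exactly for nefness, not for \(\Phi(\hat H)\geq 3\). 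The paper's argument is: a \((-2)\)-curve \(R\) with \(\hat H.R<0\) must satisfy \(H.R\in\{1,2\}\) (via \(h^0(\hat H-F_1-R)\geq 2\) and \Cref{lem:Igors.trick}, using \(H.\hat H=11\)); \Cref{lem:H.R=1.or.2} then rules out \(H.R=2\) and forces, when \(H.R=1\), the identity \(R\equiv H-\sum_i(F_i+T_i)\), so that \(H-F_1-F_2-F_3\equiv T_1+T_2+T_3+R\) is effective --- contradicting the hypothesis. Your sketch of \(\Phi(\hat H)\geq 3\), by contrast, goes through without the hypothesis: for a half-fiber \(F\) with \(\hat H.F\leq 2\), Riemann--Roch gives \(h^0(\hat H-2F)\geq 2\), hence \(H.(\hat H-2F)\geq 6\) by \Cref{lem:Igors.trick}, and \(H.F\leq \tfrac{5}{2}\) contradicts \(\Phi(H)=3\); no case analysis on \((H.G,E_i.G)\) is needed.

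A secondary omission: the statement asserts that \(\hat T_i\) is \emph{the} tail of \(\hat F_i\) in the \(10\)-sequence of \(\hat H\), i.e.\ the full chain. \Cref{lem:isotropic.elements.in.10.sequence}(2) only tells you that \(D_{jk}\) appears in that \(10\)-sequence, which a priori identifies it as a half-fiber plus a \emph{partial} tail. The missing step is to observe \(\hat H-(\hat F_i+\hat T_i)\sim H-(F_i+T_i)\), which is nef by \Cref{prop:nefness.H-F} applied to \(H\), and then to apply \Cref{prop:nefness.H-F} again, now to \(\hat H\), to conclude that \(\hat T_i\) is the full tail. Your check that \(D_{jk}.D_{ik}=1\) is correct but does not address this point. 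The computations you do carry out (\(\hat H^2=10\), \(\hat H.D_{jk}=3\), effectivity of \(D_{jk}\) by Riemann--Roch, primitivity via \(H.D_{jk}=4<6\)) are all sound and match the paper.
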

\begin{proof}
To begin with, we compute that \(\hat H^2 = 10\). Moreover, \(H.\hat H = 11\).

Next, we prove that \(\hat H\) is nef. Assume that there is a \((-2)\)-curve \(R\) with \(\hat H.R < 0\). 
Then, \(H.R > 0\) because \(\hat{H}.R_{i,j} \geq 0\) for every \((-2)\)-curve \(R_{i,j}\) appearing in the \(10\)-sequence associated to~\(H\).
We claim that \(H.R \leq 2\). First, we observe that \(h^2(\hat H - F_1) = h^0(F_1 - \hat H + K_X) = 0\) because \(F_1.(F_1 - \hat H + K_X) = -F_1.\hat H = F_1.T_1 - 4 < 0\), and that \((\hat H - F_1)^2 = 2 + 2F_1.T_1 \geq 2\), so \(h^0(\hat{H} - F_1) \geq 2\) by Riemann--Roch. The \((-2)\)-curve \(R\) is a fixed component of \(|\hat H - F_1|\), as \((\hat H - F_1).R \leq \hat H.R < 0\) by assumption. Thus, \(h^0(\hat H - F_1 - R) \geq 2\) as well. By \autoref{lem:Igors.trick}, it follows that \(H.(\hat H - F_1 - R)\geq 2\Phi(H) = 6\), whence \(H.R \leq 2\).

Therefore, we have either \(H.R = 1\) or \(H.R = 2\). 
In both cases, it holds that \((F_i + T_i).R \leq 1\) for each \(i\) by \Cref{lem:H.R=1.or.2}, since \(F_i+T_i\) is one of the divisors \(E_i\) appearing in the \(10\)-sequence associated to~\(H\). 
If \(H.R = 2\), then \[\hat H.R = 2H.R - \sum_{i=1}^3(F_i + T_i).R \geq 4-3 > 0,\] which contradicts our assumption. 
Thus, \(H.R = 1\), which implies \((F_i+T_i).R = 1\) for each \(i \in \{1,2,3\}\) by the same computation, since we assumed \(\hat{H}.R < 0\). 
By \Cref{lem:H.R=1.or.2}, \(R\) is orthogonal to all other divisors \(E_i\). A straightforward computation shows that also the divisor \(D = H - \sum_1^3 (F_i + T_i)\) satisfies \(D.(F_i + T_i) = 1\) for \(i \in \{1,2,3\}\) and \(D.E_i = 0\) for all other~\(E_i\). 
Since the classes of the divisors \(E_i\) generate \(\Num(X)\) over \(\IQ\), we have \(R \equiv D\). Hence, \(H - F_1 - F_2 - F_3\) is numerically equivalent to \(T_1 + T_2 + T_3 + R\), which contradicts the assumption of the proposition.

Next, we prove that \(\Phi(\hat H) = 3\). Let \(F\) be a half-fiber such that \(\hat H.F \leq 2\). 
Note that \(\hat H.F > 0\) by the Hodge index theorem. 
We have that \(h^2(\hat{H} - 2F)  = h^0(2F - \hat{H} + K_X) = 0\), since \(F.(2F-\hat{H})<0\), and \((\hat{H} - 2F)^2 = 10 - 4\hat{H}.F\geq 2\) by assumption, so \(h^0(\hat{H}-2F)\geq 2\) by Riemann--Roch. 
By \Cref{lem:Igors.trick}, we have \(H.(\hat{H}-2F) \geq 6\), from which we deduce \(H.F < 3\), a contradiction. Thus, by \Cref{def:Fano.polarization}, \(\hat H\) is a Fano polarization.

It remains to show the statement about the half-fibers occurring in the \(10\)-sequence associated to \(\hat{H}\). For this, let \(E = H - (F_j + T_j) - (F_k + T_k)\). Since \(E^2 = 0\) and \(E.F_i = 1\), the divisor \(E\) is linearly equivalent to an effective divisor by Riemann--Roch. According to \Cref{lem:reducibility}, there exists a nef, isotropic divisor \(\hat F_i\) and a (possibly empty) configuration \(\hat{T_i}\) of \((-2)\)-curves with \(E \sim \hat F_i + \hat T_i\). 
By \Cref{lem: bigandnefalwayscontainshalffiber}, the divisor \(\hat F_i\) is a half-fiber, because \(\Phi(H) = 3\) and \(H.\hat F_i \leq H.(\hat F_i + \hat T_i) = H.E = 4\).

We compute \(\hat H.(\hat F_i + \hat T_i) = \hat H.E = 3\). By \Cref{lem:isotropic.elements.in.10.sequence}, the divisor \(\hat F_i + \hat T_i\), so also the half-fiber~\(\hat F_i\), appears in the \(10\)-sequence associated to \(\hat H\). Observe that \(\hat H - (\hat F_i + \hat T_i) \sim H - (F_i + T_i)\). By~\Cref{prop:nefness.H-F} and the fact that \(T_i\) is the full tail of \(F_i\), we deduce that \(H - (F_i + T_i)\) is a nef divisor, hence \(\hat H - (\hat F_i + \hat T_i)\) is nef as well. By the same lemma, \(\hat T_i\) is the full tail of \(\hat F_i\).
\end{proof}

\begin{remark}
A closer look at the proof of \Cref{prop:nontrivial.tilt} shows that the divisor \(\hat H\) defined in~\eqref{eq:hat.H} is not nef if and only if \(H - F_1 - F_2 - F_3\) is numerically equivalent to an effective divisor. Moreover, in this case, there exists a \((-2)\)-curve \(R\) such that \(H \equiv F_1 + T_1 + F_2 + T_2 + F_3 + T_3 + R\), hence \(\hat{H} \equiv H + R\). 
In particular, \(H\) itself is the unique effective nef divisor class in the Weyl orbit of \(\hat{H}\).
\end{remark}

Recall that a \(3\)-sequence \((F_1,F_2,F_3)\) is called \emph{special} if \(F_2 + F_3 - F_1\) is numerically equivalent to an effective divisor.
For basic properties of special \(3\)-sequences, we refer to \cite[Section~3]{MMVnd3}.

\begin{theorem} \label{thm:special.3-sequence}
Every classical Enriques surface admitting a special \(3\)-sequence is a Reye congruence.
In particular, every nodal Enriques surface in characteristic \(p \neq 2\) is a Reye congruence.
\end{theorem}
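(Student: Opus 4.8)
The plan is to reduce to the criterion of \Cref{thm:main_criterion}: it suffices to exhibit \emph{some} Fano polarization $H'$ on $X$ and a half-fiber $F'$ appearing with its full tail $T'$ in the $10$-sequence associated to $H'$ such that $H' - 2(F' + T')$ is numerically equivalent to an effective divisor. The input is a special $3$-sequence $(F_1,F_2,F_3)$, which by definition means $F_2 + F_3 - F_1$ is numerically equivalent to an effective divisor. First I would produce from $(F_1,F_2,F_3)$ a Fano polarization in the standard way, namely by completing the $3$-sequence to a $10$-sequence and taking $H$ with $3H \equiv \sum E_i$, so that $H.F_i = 3$ and each $F_i$ appears (with some tail $T_i$) in the associated $10$-sequence. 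The point of the special condition is that it forces a numerical relation that, after the ``tilt'' of \Cref{prop:nontrivial.tilt}, becomes exactly the effectivity statement needed.

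The key steps, in order, are as follows. \textbf{(1)} Split into two cases according to whether $H - F_1 - F_2 - F_3$ is numerically equivalent to an effective divisor. \textbf{(2)} In the case where it is \emph{not}, apply \Cref{prop:nontrivial.tilt} to obtain the Fano polarization $\hat H = 2H - (F_1+T_1) - (F_2+T_2) - (F_3+T_3)$ together with half-fibers $\hat F_i$ and full tails $\hat T_i$ satisfying $\hat F_i + \hat T_i \sim H - (F_j+T_j) - (F_k+T_k)$ for each permutation $(i,j,k)$ of $(1,2,3)$. Then compute
\[
    \hat H - 2(\hat F_1 + \hat T_1) \equiv \hat H - 2\big(H - (F_2+T_2) - (F_3+T_3)\big) = (F_2 + T_2) + (F_3 + T_3) - (F_1 + T_1),
\]
and observe that this is numerically equivalent to $(F_2 + F_3 - F_1) + T_2 + T_3 - T_1$. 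Since $F_2 + F_3 - F_1$ is effective up to numerical equivalence by speciality, one still has to absorb the $-T_1$; here I would use that $T_1$ consists of $(-2)$-curves orthogonal to $H$ and argue, via \Cref{lem:reducibility} applied to $\hat H - 2(\hat F_1 + \hat T_1)$ (which has square $-2$ and is orthogonal only to the relevant curves), that its unique effective nef-plus-curves representative is genuinely effective — equivalently, show directly that $h^0$ of this class (or its twist by $K_X$) is nonzero by a Riemann--Roch computation as in \Cref{lem:existence.Reye.bundle}. Then \Cref{thm:main_criterion} applied to $\hat H$, $\hat F_1$, $\hat T_1$ gives that $X$ is a Reye congruence. \textbf{(3)} In the remaining case, $H - F_1 - F_2 - F_3$ \emph{is} numerically equivalent to an effective divisor; then one expects $H - 2(F_1 + T_1)$ itself, or a similar small modification using $H - F_1 - F_2 - F_3 \equiv \text{effective}$ together with $F_2 + F_3 - F_1 \equiv \text{effective}$ (adding these gives $2F_2 + 2F_3 - 2F_1 \equiv \text{effective}$, hence $F_2 + F_3 - F_1$ type relations reinforcing effectivity of $H - 2F_1$-type classes), to be numerically effective; I would chase the $(-2)$-curve contributions exactly as in the proof of \Cref{prop:nontrivial.tilt} to pin down which tail to use, then again invoke \Cref{thm:main_criterion}. \textbf{(4)} For the ``in particular'' clause: a nodal Enriques surface has a $(-2)$-curve, hence non-degeneracy at least $1$; by \cite[Theorem 1.3]{MMV:extraspecial} (or the cited classification) every Enriques surface of non-degeneracy $\geq 3$ not covered above, together with the observation that in characteristic $\neq 2$ there are no extra-special surfaces, admits a special $3$-sequence — this last implication for nodal surfaces of non-degeneracy $\geq 3$ is presumably in \cite{MMVnd3}, so I would cite it, reducing characteristic $\neq 2$ to the first statement. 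Extra-special surfaces (only in characteristic $2$) are deferred to \Cref{sec:extraspecial}.

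The main obstacle is \textbf{Step (2)}, specifically handling the tails: the speciality hypothesis gives effectivity of $F_2 + F_3 - F_1$ on the nose, but the natural class that appears after the tilt is $(F_2+T_2)+(F_3+T_3)-(F_1+T_1)$, and one must show the ``$-T_1$'' does not destroy effectivity. I expect this to come down to a careful application of \Cref{lem:reducibility} and the Riemann--Roch / Serre duality bookkeeping already used in \Cref{lem:existence.Reye.bundle} and \Cref{prop:H-2F.is.negative.definite}, possibly with a small case analysis depending on whether $T_1$ is empty; if $T_1 = \emptyset$ (which one might be able to arrange by choosing the $3$-sequence and the completion well), the argument is immediate. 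A secondary subtlety is making sure that, in Step (2), $\hat F_1$ really appears with its \emph{full} tail $\hat T_1$ in the $10$-sequence of $\hat H$ so that \Cref{thm:main_criterion} applies verbatim — but this is exactly the content of the last paragraph of \Cref{prop:nontrivial.tilt}, so it can be quoted.
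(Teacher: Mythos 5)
Your overall architecture coincides with the paper's proof: complete the special $3$-sequence to a $10$-sequence with Fano polarization $H$, split on whether $H-F_1-F_2-F_3$ is numerically effective, tilt via \Cref{prop:nontrivial.tilt} in the negative case, and feed the result into \Cref{thm:main_criterion}; the ``in particular'' clause is handled exactly as you say, by quoting \cite[Theorem~3.11]{MMVnd3}. The computation $\hat H - 2(\hat F_1+\hat T_1) \equiv (F_2+F_3-F_1)+T_2+T_3-T_1$ is also the one the paper performs.

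However, the step you single out as the main obstacle --- absorbing the $-T_1$ --- is left genuinely open, and the two tools you propose for it would not close it. Riemann--Roch applied to this class (which has square $-2$ and vanishing $h^2$ by an $H$-degree computation, as in \Cref{lem:existence.Reye.bundle}) only yields $h^0=h^1$; it cannot by itself produce a section, and indeed effectivity here must use the speciality hypothesis, not just lattice data. Likewise \Cref{lem:reducibility} takes as input an \emph{effective} divisor of non-negative square, so it cannot be applied to a square~$-2$ class whose effectivity is precisely what is in question. The paper's resolution is elementary and different in kind: writing $S_1\equiv F_2+F_3-F_1$ for the effective divisor provided by speciality and $T_1=R_{1,1}+\dots+R_{1,r_1}$, one has $S_1.R_{1,1}=-1$ (since $R_{1,1}.F_1=1$ and $R_{1,1}.F_2=R_{1,1}.F_3=0$), so $R_{1,1}$ is a component of $S_1$ and $S_1-R_{1,1}$ is effective; then $(S_1-R_{1,1}).R_{1,2}=-1$, and recursively $S_1-T_1$ is effective, whence $\hat H - 2(\hat F_1+\hat T_1)\equiv (S_1-T_1)+T_2+T_3$ is effective. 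The same peeling argument, applied to the effective representative of $H-F_1-F_2-F_3$, also makes your Step~(3) precise: one gets $H-2(F_1+T_1)\equiv (S_1-T_1)+T_2+T_3+R$ with $R$ effective, so no tilt is needed in that case. You should supply this intersection-theoretic argument explicitly; without it the proof is incomplete at its central point.
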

\begin{proof}
Thanks to \Cref{thm:main_criterion}, it suffices to find a Fano polarization \(H\) such that \(H - 2(F_1+T_1)\) is numerically equivalent to an effective divisor, where \(T_1\) is the tail of a half-fiber \(F_1\) sitting in the associated \(10\)-sequence.

Complete the special \(3\)-sequence \((F_1,F_2,F_3)\) to a \(10\)-sequence with associated Fano polarization~\(H\) (see \cite[Lemma 1.6.1, Theorem 3.3]{Cossec:Picard_group}).
By definition, \(F_2 + F_3 - F_1\) is numerically equivalent to an effective divisor~\(S_1\). Observe that \(S_1 - T_1\) is effective as well. Indeed, if \(T_1 = R_{1,1} + \ldots + R_{1,r_1}\) for \(r_1 \geq 1\), then \(S_1.R_{1,1} = -1\), so \(S_1 - R_{1,1}\) is effective and, recursively, so is \(S_1 - T_1\).

Suppose first that \(H - F_1 - F_2 - F_3\) is numerically equivalent to an effective divisor \(D\). Arguing recursively as in the previous paragraph, we deduce that \(R = D - T_1 - T_2 - T_3\) is effective as well. Thus, we conclude by observing that \(H - 2(F_1 + T_1) \equiv (S_1 - T_1) + T_2 + T_3 + R\).

Suppose now that \(H - F_1 - F_2 - F_3\) is not numerically equivalent to an effective divisor. By \Cref{prop:nontrivial.tilt}, the divisor \(\hat H\) defined in \eqref{eq:hat.H} is a Fano polarization. 
Moreover, the associated \(10\)-sequence contains a half-fiber \(\hat F_1\) with tail~\(\hat T_1\) such that \(\hat H - 2(\hat F_1 + \hat T_1) \equiv (S_1 - T_1) + T_2 + T_3\), and we conclude.

The second part of the statement follows immediately from \cite[Theorem~3.11]{MMVnd3}.
\end{proof}

\begin{remark} \label{rk:geometric.intrepretation}
The proof of \Cref{thm:special.3-sequence} can be interpreted geometrically. Indeed, let \(\varphi_H \colon X \to \mathbb{P}^5\) be the morphism induced by a Fano polarization \(H\) on~\(X\) and let \(H_i\) (resp. \(H_i'\)) be the plane spanned by the image of \(F_i\) (resp. \(F_i'\)). The condition that \((F_1,F_2,F_3)\) is special, or more precisely that \(H^0(X,\cO_X(F_1+F_2-F_3)) \neq 0\), implies that the planes \(H_1,H_2,H_3\) meet in the point \(F_1 \cap F_2 \cap F_3\).

In this situation, the proof of \Cref{thm:special.3-sequence} shows that either \(\varphi_H\) maps \(X\) into a smooth quadric in \(\IP^5\) (if \(H-F_1-F_2-F_3\) is numerically equivalent to an effective divisor), or there exists an explicit standard Cremona transformation of \(\IP^5\) mapping \(X\) into a smooth quadric. More precisely, in this second case there exists a commutative diagram
\[
\begin{tikzcd}
    X \arrow{r}{\varphi_H} \arrow{dr}[swap]{\varphi_{\hat{H}}} &\IP^5 \arrow[dashed]{d}{\varphi}\\
    &\IP^5
\end{tikzcd}
\]
where \(\varphi \colon \IP^5 \dashrightarrow \IP^5\) is the Cremona transformation induced by the linear system \(|2H-F_1-F_2-F_3|\) on \(\IP^5\) (cf. \cite[Remark~6.23]{conte.verra}), and the morphism \(\varphi_{\hat{H}}\) induced by \(|\hat{H}|\) maps \(X\) into a smooth quadric.
\end{remark}

\section{Fano--Reye polarizations on extra-special Enriques surfaces} \label{sec:extraspecial}

In this section, we conclude the proof of \Cref{thm:main.theorem.introduction}. In \cite[Theorem~3.11]{MMVnd3}, we showed that every nodal, non-extra-special Enriques surface admits a special \(3\)-sequence. By \Cref{thm:special.3-sequence}, it only remains to settle \Cref{thm:main.theorem.introduction} for the three families of classical extra-special Enriques surfaces in characteristic~\(2\). For more details on extra-special Enriques surfaces, we refer to \cite{MMV:extraspecial}.

\begin{theorem} \label{thm:extra-special}
Every classical extra-special Enriques surface is a Reye congruence.
\end{theorem}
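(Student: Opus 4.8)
**The plan is to apply our main criterion, \Cref{thm:main_criterion}, directly to each of the three families of classical extra-special Enriques surfaces.** By that theorem, it suffices to exhibit, for each such surface $X$, a Fano polarization $H$ and a half-fiber $F_i$ with tail $T_i$ appearing in the $10$-sequence associated to $H$ such that $H - 2(F_i + T_i)$ is numerically equivalent to an effective divisor. Since extra-special surfaces have non-degeneracy $1$ or $2$, the $10$-sequences are highly degenerate (long tails), and the combinatorics of $\Num(X)$ are rigid enough that everything can be made explicit. The three families correspond to the Dynkin-type data classified in \cite{MMV:extraspecial}; for each, I would fix a concrete model of $\Num(X)$ together with the configuration of $(-2)$-curves, write down a Fano polarization $H$ (equivalently, a $10$-sequence $(E_1,\ldots,E_{10})$ with $3H \equiv \sum E_i$), and then verify the numerical inequality by exhibiting the effective divisor explicitly as a non-negative combination of the known $(-2)$-curves.

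\textbf{The concrete steps are as follows.} First, recall from \cite{MMV:extraspecial} the lattice-theoretic description of each of the three families — the one of type $\widetilde{E}_8$ (related to the configuration in \eqref{eq:case.E8.extra.special}) and the two others — including an explicit nodal configuration generating (a finite-index sublattice of) $\Num(X)$. Second, for each family, identify a half-fiber $F$ whose tail $T$ in some chosen $10$-sequence is as long as possible, so that $F + T$ absorbs most of the negative curves; the candidate divisor $C \coloneqq H - 2(F+T)$ then has $C^2 = -2$, $H.C = 4$, and one computes $C.R$ against the curves $R$ in the configuration. Third, check that $C$ (or $C + K_X$) is effective by solving the resulting linear system over $\IZ_{\geq 0}$ for the coefficients of $C$ in terms of the $(-2)$-curves — since $C$ will be negative definite by \Cref{prop:H-2F.is.negative.definite}, its class as an effective divisor, if it exists, is unique and supported on the nodal configuration, so this is a finite check. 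Once effectivity is confirmed, \Cref{thm:main_criterion} immediately yields that $X$ (or the twist by $K_X$, which does not change the isomorphism class) is a Reye congruence.

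\textbf{The main obstacle I anticipate is twofold.} On one hand, one must be sure that a Fano polarization exists at all on these surfaces and that the half-fiber one picks genuinely appears in its associated $10$-sequence with the claimed tail — this requires invoking \Cref{lem:isotropic.elements.in.10.sequence} and \Cref{prop:nefness.H-F} carefully against the explicit configuration, and the extra-special surfaces are precisely the ones where non-degeneracy is minimal, so one has little room to maneuver among $10$-sequences. On the other hand, the effectivity check for $C$ is where a genuine obstruction could hide: a priori $H - 2(F+T)$ need not be effective, and if for the most natural choice of $H$ it fails, one may need either to twist by $K_X$ or to pass to a different Fano polarization (for instance via the tilting construction of \Cref{prop:nontrivial.tilt}, even though those surfaces have no special $3$-sequence). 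I expect that in each of the three cases a direct search succeeds — the $\widetilde{E}_8$ case being the most delicate because its configuration spans all of $\Num(X)$ — but the verification is essentially a case-by-case computation in the three explicit lattices rather than a uniform argument.
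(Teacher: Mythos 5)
Your plan is essentially the paper's own proof: reduce via \Cref{thm:main_criterion} to finding, for each of the three extra-special families, a $10$-sequence whose Fano polarization $H$ satisfies that $H-2(F_i+T_i)$ is numerically equivalent to an effective divisor, and verify this by an explicit computation in the known dual graph of $(-2)$-curves. The only caveat is that the effective divisors still need to be exhibited (the paper writes them out for each case), and your heuristic of taking the longest possible tail is only what the paper does in the $\tilde E_8$ case — for $\tilde D_8$ and $\tilde E_7$ it works with tails of length one — but since you allow a case-by-case search this does not affect the correctness of the approach.
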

\begin{proof}
Thanks to \Cref{thm:main_criterion}, it suffices to find a \(10\)-sequence with Fano polarization \(H\) such that \(H - 2(F_i+T_i)\) is numerically equivalent to an effective divisor for some \(i\), where \(T_i\) is the tail of a half-fiber \(F_i\) sitting in the \(10\)-sequence.

First, let \(X\) be a classical, extra-special surface of type \(\tilde E_8\). By definition, the dual graph of \((-2)\)-curves on~\(X\) is the following:
\[
\begin{tikzpicture}
    \node (R4) at (0,0) [nodal, label = below:\(R_{1,7}\)] {};
    \node (R5) at (0,1) [nodal] {};
    \node (R6) at (1,0) [nodal, label = below:\(R_{1,6}\)] {};
    \node (R7) at (2,0) [nodal, label = below:\(R_{1,5}\)] {};
    \node (R8) at (3,0) [nodal, label = below:\(R_{1,4}\)] {};
    \node (R9) at (4,0) [nodal, label = below:\(R_{1,3}\)] {};
    \node (RX) at (5,0) [nodal, label = below:\(R_{1,2}\)] {};
    \node (R3) at (-1,0) [nodal, label = below:\(R_{1,8}\)] {};
    \node (R2) at (-2,0) [nodal, label = below:\(R_{1,9}\)] {};
    \node (R11) at (6,0) [nodal, label = below:\(R_{1,1}\)] {};
    \draw (R2)--(R3)--(R4) (R5)--(R4)--(RX) (RX)--(R11);
\end{tikzpicture}
\]
Consider the \(1\)-degenerate \(10\)-sequence containing  the curves \(R_{1,j}\) as indicated in the picture above, and the half-fiber \(F_1\) of type \(\II^*\) (cf. \cite[Theorem 8.10.41]{DolgachevKondoBook}). 
Let \(H\) be the corresponding Fano polarization.
One computes that the divisor \(H - 2(F_1 + R_{1,1} + \ldots + R_{1,9})\) is numerically equivalent to the following effective divisor:
\begin{equation} \label{eq:case.E8.extra.special}
\begin{tabular}{c}
\begin{tikzpicture}
    \node (R4) at (0,0) [nodal, label = below: {\(7\)}] {};
    \node (R5) at (0,1) [nodal, label = right: {\(4\)}] {};
    \node (R6) at (1,0) [nodal, label = below: {\(6\)}] {};
    \node (R7) at (2,0) [nodal, label = below: {\(5\)}] {};
    \node (R8) at (3,0) [nodal, label = below: {\(4\)}] {};
    \node (R9) at (4,0) [nodal, label = below: {\(3\)}] {};
    \node (RX) at (5,0) [nodal, label = below: {\(2\)}] {};
    \node (R3) at (-1,0) [nodal, label = below: {\(4\)}] {};
    \node (R2) at (-2,0) [nodal, label = below: {\(1\)}] {};
    \node (R11) at (6,0) [nodal, label = below: {\(1\)}] {};
    \draw (R2)--(R3)--(R4) (R5)--(R4)--(RX) (RX)--(R11);
\end{tikzpicture}
\end{tabular}
\end{equation}

Let now \(X\) be a classical, extra-special surface of type \(\tilde D_8\). By definition, the dual graph of \((-2)\)-curves on~\(X\) is the following:
\[
\begin{tikzpicture}     \node (R4) at (0,0) [nodal, label=below:\(R_{2,6}\)] {};
    \node (R5) at (0,1) [nodal, label=right:\(R_{2,7}\)] {};
    \node (R6) at (1,0) [nodal, label=below:\(R_{2,5}\)] {};
    \node (R7) at (2,0) [nodal, label=below:\(R_{2,4}\)] {};
    \node (R8) at (3,0) [nodal, label=below:\(R_{2,3}\)] {};
    \node (R9) at (4,0) [nodal, label=below:\(R_{2,2}\)] {};
    \node (RX) at (5,0) [nodal] {};
    \node (R3) at (-1,0) [nodal] {};
    \node (R2) at (-2,0) [nodal, label=below:\(R_{1,1}\)] {};
    \node (R1) at (4,1) [nodal, label=right:\(R_{2,1}\)] {};
\draw (R2)--(R3)--(R4) (R5)--(R4)--(RX) (R1)--(R9);
\end{tikzpicture}
\]
Consider the \(2\)-degenerate \(10\)-sequence containing the curves \(R_{i,j}\) as indicated in the picture above, the half-fiber \(F_1\) of type \(\I_4^*\), and a half-fiber \(F_2\) such that \(|2F_2|\) has a simple fiber \(G_2\) of type \(\II^*\) as indicated below (cf. \cite[Theorem 8.10.70]{DolgachevKondoBook}).
\[
\begin{tikzpicture}[scale=0.6]
    \node (R4) at (0,0) [nodal] {};
    \node (R5) at (0,1) [nodal] {};
    \node (R6) at (1,0) [nodal] {};
    \node (R7) at (2,0) [nodal] {};
    \node (R8) at (3,0) [nodal] {};
    \node (R9) at (4,0) [nodal] {};
    \node (RX) at (5,0) [nodal] {};
    \node (R3) at (-1,0) [nodal] {};
    \node (R2) at (-2,0) [nodal,fill=white,label=left:\(F_1\)] {};
    \node (R1) at (4,1) [nodal] {};
    \draw (R2)--(R3);
    \draw [very thick, densely dashed] (R3)--(R4) (R5)--(R4)--(RX) (R1)--(R9);
\end{tikzpicture}
\qquad
\begin{tikzpicture}[scale=0.6]
    \node (R4) at (0,0) [nodal] {};
    \node (R5) at (0,1) [nodal] {};
    \node (R6) at (1,0) [nodal] {};
    \node (R7) at (2,0) [nodal] {};
    \node (R8) at (3,0) [nodal] {};
    \node (R9) at (4,0) [nodal] {};
    \node (RX) at (5,0) [nodal] {};
    \node (R3) at (-1,0) [nodal] {};
    \node (R2) at (-2,0) [nodal,label=left:\(G_2\)] {};
    \node (R1) at (4,1) [nodal,fill=white] {};
    \draw [very thick] (R2)--(R3)--(R4) (R5)--(R4)--(RX);
    \draw (R1)--(R9);
\end{tikzpicture}
\]
Let \(H\) be the corresponding Fano polarization. 
One computes that \(H-2(F_1+R_{1,1})\) is numerically equivalent to the following effective divisor:
\[
\begin{tikzpicture}     
    \node (R4) at (0,0) [nodal, label=below: \(6\)] {};
    \node (R5) at (0,1) [nodal, label=right: \(3\)] {};
    \node (R6) at (1,0) [nodal, label=below: \(5\)] {};
    \node (R7) at (2,0) [nodal, label=below: \(4\)] {};
    \node (R8) at (3,0) [nodal, label=below: \(3\)] {};
    \node (R9) at (4,0) [nodal, label=below: \(2\)] {};
    \node (RX) at (5,0) [nodal] {};
    \node (R3) at (-1,0) [nodal, label=below: \(4\)] {};
    \node (R2) at (-2,0) [nodal, label=below: \(1\)] {};
    \node (R1) at (4,1) [nodal, label=right: \(1\)] {};
    \draw (R2)--(R3)--(R4) (R5)--(R4)--(RX) (R1)--(R9);
\end{tikzpicture}
\]

Finally, let \(X\) be a classical, extra-special surface of type \(\tilde E_7\). By definition, the dual graph of \((-2)\)-curves on~\(X\) is the following:
\[
\begin{tikzpicture}
    \node (R4) at (0,0) [nodal,label=below: \(R_{1,5}\)] {};
    \node (R5) at (0,1) [nodal] {};
    \node (R6) at (1,0) [nodal,label=below: \(R_{1,4}\)] {};
    \node (R7) at (2,0) [nodal,label=below: \(R_{1,3}\)] {};
    \node (R8) at (3,0) [nodal,label=below: \(R_{1,2}\)] {};
    \node (R9) at (4,0) [nodal,label=below: \(R_{1,1}\)] {};
    \node (RX) at (5,0) [nodal] {};
    \node (R3) at (-1,0) [nodal,label=below: \(R_{1,6}\)] {};
    \node (R2) at (-2,0) [nodal,label=below: \(R_{1,7}\)] {};
    \node (R1) at (-3,0) [nodal] {};
    \node (R11) at (6,0) [nodal,label=below: \(R_{2,1}\)] {};
    \draw (R1)--(R2)--(R3)--(R4) (R5)--(R4)--(RX);
    \draw[double] (RX)--(R11);
\end{tikzpicture}
\]
Consider the \(10\)-sequence containing the curves \(R_{i,j}\) as indicated in the picture above, the half-fiber \(F_1\) of type \(\III^*\), and a half-fiber \(F_2\) such that \(|2F_2|\) has a simple fiber \(G_2\) of type \(\II^*\) as indicated below (cf. \cite[Proposition 8.10.48]{DolgachevKondoBook}). 
\[
\begin{tikzpicture}[scale=0.6]
    \node (R4) at (0,0) [nodal] {};
    \node (R5) at (0,1) [nodal] {};
    \node (R6) at (1,0) [nodal] {};
    \node (R7) at (2,0) [nodal] {};
    \node (R8) at (3,0) [nodal] {};
    \node (R9) at (4,0) [nodal,fill=white] {};
    \node (RX) at (5,0) [nodal,fill=white] {};
    \node (R3) at (-1,0) [nodal] {};
    \node (R2) at (-2,0) [nodal] {};
    \node (R1) at (-3,0) [nodal,label=left:\(F_1\)] {};
    \node (R11) at (6,0) [nodal,fill=white] {};
    \draw [very thick, densely dashed] (R1)--(R2)--(R3)--(R4) (R5)--(R4)--(R6)--(R7)--(R8);
    \draw (R8)--(R9)--(RX);
    \draw[double] (RX)--(R11);
\end{tikzpicture}
\]\[
\begin{tikzpicture}[scale=0.6]
    \node (R4) at (0,0) [nodal] {};
    \node (R5) at (0,1) [nodal] {};
    \node (R6) at (1,0) [nodal] {};
    \node (R7) at (2,0) [nodal] {};
    \node (R8) at (3,0) [nodal] {};
    \node (R9) at (4,0) [nodal] {};
    \node (RX) at (5,0) [nodal] {};
    \node (R3) at (-1,0) [nodal] {};
    \node (R2) at (-2,0) [nodal] {};
    \node (R1) at (-3,0) [nodal,fill=white,label=left:\(G_2\)] {};
    \node (R11) at (6,0) [nodal,fill=white] {};
    \draw (R1)--(R2);
    \draw [very thick] (R2)--(R3)--(R4) (R5)--(R4)--(RX);
    \draw[double] (RX)--(R11);
\end{tikzpicture}
\]
Let \(H\) be the corresponding Fano polarization.
One computes that \(H-2(F_2 + R_{2,1})\) is numerically equivalent to the following effective divisor:
\[
\begin{tikzpicture}
    \node (R4) at (0,0) [nodal,label=below: \(1\)] {};
    \node (R5) at (0,1) [nodal] {};
    \node (R6) at (1,0) [nodal,label=below: \(1\)] {};
    \node (R7) at (2,0) [nodal,label=below: \(1\)] {};
    \node (R8) at (3,0) [nodal,label=below: \(1\)] {};
    \node (R9) at (4,0) [nodal,label=below: \(1\)] {};
    \node (RX) at (5,0) [nodal,label=below: \(1\)] {};
    \node (R3) at (-1,0) [nodal,label=below: \(1\)] {};
    \node (R2) at (-2,0) [nodal,label=below: \(1\)] {};
    \node (R1) at (-3,0) [nodal,label=below: \(1\)] {};
    \node (R11) at (6,0) [nodal] {};
    \draw (R1)--(R2)--(R3)--(R4) (R5)--(R4)--(RX);
    \draw[double] (RX)--(R11);
\end{tikzpicture} \qedhere
\]
\end{proof}

\section{A nodal Enriques surface of non-degeneracy 10 without~ample~Fano--Reye~polarizations} \label{sec:counterexample}

As explained in \Cref{rmk:R(W)} in the introduction, Enriques surfaces arising via the Reye construction from a \emph{regular} web of quadrics necessarily admit an ample Fano--Reye polarization, and in particular their nondegeneracy is \(10\). Conversely, it was claimed in \cite[p.~40]{Dolgachev.Reider} that every Enriques surface of nondegeneracy \(10\) admits an ample Fano--Reye polarization.

In this section, we provide a counterexample to this claim. In particular, we show that if \(X\) is an Enriques surface of Kond\={o}'s type~\(\mathrm{VII}\), then \(X\) admits no ample Fano--Reye polarization, despite the fact that it has non-degeneracy \(10\) \cite[§7.3.7]{Moschetti.Rota.Schaffler}. Recall from \cite[Figure 8.15]{DolgachevKondoBook} that the dual graph of \((-2)\)-curves on~\(X\), labelled according to \cite[Figure 7.7]{Kondo:Enriques.finite.aut}, looks as follows:

\[
    \begin{tikzpicture}
\begin{scope}[scale=0.9]
    \useasboundingbox (-5,-5.2) rectangle (5,5.2);
    
    \foreach \x in {1,2,3,4,5,6,7,8,9,10,11,12,13,14,15}{
            \node (A\x) at (138-24*\x:3) [nodal] {};
        };
    \foreach \x in {1,2,3,4,5}{
            \node (B\x) at (162-72*\x:4) [nodal] {};
        };
    
    \draw (A1)--(A5) (A1)--(A9) (A1)--(A12) (A1)--(A14)
          (A2)--(A4) (A2)--(A8) (A2)--(A11) (A2)--(A13)
          (A3)--(A6) (A3)--(A7) (A3)--(A10) (A3)--(A15)
          (A4)--(A8) (A4)--(A12) (A4)--(A15)
          (A5)--(A7) (A5)--(A11) (A5)--(A14)
          (A6)--(A9) (A6)--(A10) (A6)--(A13)
          (A7)--(A11) (A7)--(A15)
          (A8)--(A10) (A8)--(A14)
          (A9)--(A12) (A9)--(A13)
          (A10)--(A14) 
          (A11)--(A13)
          (A12)--(A15);

    \draw [double]  (B1)--(A1) (B1)--(A2) (B1)--(A3)
                    (B2)--(A4) (B2)--(A5) (B2)--(A6)
                    (B3)--(A7) (B3)--(A8) (B3)--(A9)
                    (B4)--(A10) (B4)--(A11) (B4)--(A12)
                    (B5)--(A13) (B5)--(A14) (B5)--(A15);
    \draw [double,in=150,out=30,relative] (B1) to (B2) (B2) to (B3) (B3) to (B4) (B4) to (B5) (B5) to (B1);
    \draw [double,in=90,out=90,relative,looseness=1.7] (B1) to (B3) (B3) to (B5) (B5) to (B2) (B2) to (B4) (B4) to (B1);

    \node at (90:4.4) {\(K_1\)};
    \node at (18:4.4) {\(K_2\)};
    \node at (-54:4.4) {\(K_3\)};
    \node at (-126:4.4) {\(K_4\)};
    \node at (162:4.4) {\(K_5\)};

    \node at (114:3.4) {\(E_1\)};
    \node at (98:3.1) {\(E_{11}\)};
    \node at (66:3.4) {\(E_{13}\)};
    \node at (42:3.4) {\(E_{12}\)};
    \node at (26:3.1) {\(E_{14}\)};
    \node at (-6:3.4) {\(E_4\)};
    \node at (-30:3.4) {\(E_7\)};
    \node at (-46:3.1) {\(E_{10}\)};
    \node at (-78:3.4) {\(E_{15}\)};
    \node at (-102:3.4) {\(E_3\)};
    \node at (-118:3.1) {\(E_6\)};
    \node at (-150:3.4) {\(E_9\)};
    \node at (-174:3.4) {\(E_5\)};
    \node at (170:3.1) {\(E_2\)};
    \node at (138:3.4) {\(E_8\)};

\end{scope}    
\end{tikzpicture}
\]

\begin{lemma} \label{lem:Fano.polarizations.type.VII}
    Every ample Fano polarization on~\(X\) is numerically equivalent to one of the divisors
    \[H_n \coloneqq \frac{1}{3}\sum_{i=1}^{15}{E_i} + \frac{1}{6}\sum_{j=1}^5{K_j} + \frac{1}{2}K_n \]
    for \(n=1,\ldots,5\). Moreover, the group \(\Aut(X)\cong \mathfrak{S}_5\) acts transitively on the \(H_i\).
\end{lemma}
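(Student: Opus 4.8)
The plan is to prove the two assertions of the lemma separately, after a warm-up verifying that each $H_n$ is genuinely an ample Fano polarization; the $\mathfrak{S}_5$-symmetry of the dual graph will do most of the work.

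\emph{Warm-up.} Using the intersection numbers visible in the dual graph — which are $\mathfrak{S}_5$-symmetric: each $E_i$ meets exactly four of the $E_j$ with multiplicity $1$ and exactly one of the $K_j$ with multiplicity $2$, while $K_j.K_{j'}=2$ for all $j\neq j'$ — I would first check that each $H_n$ lies in $\Num(X)$ (using the relations among the twenty curve classes) and that $H_n^2=10$, $H_n.E_i\in\{1,2\}$ and $H_n.K_j\in\{2,4\}$. Since the twenty displayed curves are all the $(-2)$-curves on $X$ (see \cite{Kondo:Enriques.finite.aut}), Nakai--Moishezon gives that $H_n$ is ample. Finally, the standard bound $\Phi(H_n)^2\le H_n^2=10$ forces $\Phi(H_n)\le 3$, and I would exhibit an explicit $10$-degenerate sequence of half-fibers with numerical sum $3H_n$ (equivalently, check that $H_n.F\ge 3$ for every half-fiber class $F$ on $X$), so that $H_n$ is a Fano polarization.

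\emph{Every ample Fano polarization is numerically some $H_n$.} Let $H$ be an arbitrary ample Fano polarization, with associated $10$-sequence $(E_1,\dots,E_{10})$. By \Cref{lem:isotropic.elements.in.10.sequence}(1), ampleness of $H$ forces that no $(-2)$-curve occurs in this sequence, so it is a tuple of half-fibers $(F_1,\dots,F_{10})$ with $F_i.F_j=1-\delta_{ij}$ and $3H\equiv\sum_i F_i$, and conversely $H=\frac13\sum_i F_i$. (On a classical Enriques surface the two half-fibers of a genus-one pencil are numerically equivalent, so there is a single half-fiber class per pencil, and the ambiguity in the $10$-sequence is invisible numerically.) It therefore suffices to classify the unordered $10$-tuples of half-fiber classes meeting pairwise with multiplicity $1$, and to retain those for which the resulting $H$ meets all twenty curves positively. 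As $\Aut(X)$ is finite, the nef cone of $X$ is rational polyhedral, so $X$ has only finitely many genus-one pencils; I would list their half-fiber classes from the dual graph (the irreducible ones, together with those supported on sub-configurations that form extended Dynkin diagrams), observe that they fall into a few $\mathfrak{S}_5$-orbits, and then go through the finitely many admissible $10$-tuples, using the $\mathfrak{S}_5$-symmetry to keep the casework manageable, to find that only the orbit producing $H_1,\dots,H_5$ yields an ample polarization. I expect this enumeration to be the main obstacle: the number of half-fibers, and a fortiori of admissible $10$-tuples, is sizeable, so the argument either needs a carefully organised case analysis or is most safely carried out by a direct computation in the lattice $\Num(X)$.

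\emph{Transitivity.} The group $\Aut(X)\cong\mathfrak{S}_5$ preserves the sets $\{E_1,\dots,E_{15}\}$ and $\{K_1,\dots,K_5\}$, since these are characterised intrinsically by the dual graph; hence it fixes both $\sum_i E_i$ and $\sum_j K_j$. The induced permutation action on $\{K_1,\dots,K_5\}$ has trivial kernel — a normal subgroup of $\mathfrak{S}_5$ fixing five points pointwise must be trivial — so it is the standard transitive action of $\mathfrak{S}_5$ on five letters. Consequently $g\cdot H_n=H_m$ whenever $g(K_n)=K_m$, and $\mathfrak{S}_5$ permutes $H_1,\dots,H_5$ transitively.
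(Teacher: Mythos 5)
Your overall architecture (verify that each \(H_n\) is an ample Fano polarization; deduce transitivity from the \(\mathfrak{S}_5\)-action on the outer vertices \(K_n\)) matches the paper, and your transitivity argument and integrality/ampleness checks are essentially the same as ours. However, there is a genuine gap in the central step, namely the proof that \emph{every} ample Fano polarization is one of the \(H_n\). You reduce this to enumerating all unordered \(10\)-tuples of half-fiber classes meeting pairwise with multiplicity \(1\) whose sum is \(3\)-divisible and ample, and you then explicitly concede that this enumeration is ``the main obstacle'' and is left undone. That enumeration \emph{is} the lemma: a type \(\mathrm{VII}\) surface has a large (though finite) set of half-fiber classes, and without actually carrying out the count — or replacing it by something — nothing has been proved. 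The paper sidesteps this entirely by invoking the classification of Moschetti--Rota--Schaffler \cite[§7.3.7]{Moschetti.Rota.Schaffler}, which says there are \emph{precisely five} ample Fano polarizations on \(X\); given that external count, it suffices to exhibit five distinct ample Fano polarizations, which is exactly what the explicit \(H_n\) provide. If you do not want to cite that result, you must actually perform (or mechanize) the lattice computation you describe.

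A secondary, smaller gap: your verification that \(\Phi(H_n)=3\) is also deferred (``I would exhibit an explicit \(10\)-degenerate sequence\(\ldots\)''). The paper has a short argument you could adopt: by \cite[Proposition~8.9.33]{DolgachevKondoBook} every elliptic fibration on a type \(\mathrm{VII}\) surface has a reducible fiber with at least \(5\) components, so ampleness of \(H_n\) forces \(H_n.G\ge 5\) for every elliptic pencil \(|G|\), hence \(H_n.F\ge 3\) for every half-fiber \(F\) (since \(G\equiv 2F\)); combined with \(\Phi(H_n)^2\le H_n^2=10\) this pins down \(\Phi(H_n)=3\) with no enumeration of half-fibers at all.
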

\begin{proof}
By \cite[§7.3.7]{Moschetti.Rota.Schaffler}, there are precisely \(5\) ample Fano polarizations on~\(X\): we claim that these are indeed \(H_1,\ldots,H_5\). Moreover, we know by \cite[Table~2]{Moschetti.Rota.Schaffler} that the \((-2)\)-curves on~\(X\), together with the divisor \(\frac{1}{2}(E_1+E_2+E_3+E_4+E_{15})\), generate the numerical lattice \(\Num(X)\).
A computation shows that the divisors \(H_n\) meet these divisors with integer multiplicity, so \(H_n \in \Num(X)^\vee =\Num(X)\) by the fact that \(\Num(X)\) is unimodular. Furthermore, \(H_n^2 = 10\) and \(H_n.E_i, H_n.K_j >0\) for all indices \(n\), \(i\) and~\(j\), so the divisors \(H_n\) are indeed ample.

It remains to show that \(\Phi(H_n)=3\). Since every elliptic fibration on~\(X\) admits a reducible fiber with at least \(5\) components (cf. \cite[Proposition~8.9.33]{DolgachevKondoBook}) and the \(H_n\) are ample, it follows that \(H_n . G \ge 5\) for every elliptic pencil \(|G|\). Thus, \(H_n . F > 2\) for every half-fiber \(F\) on~\(X\). Since \(\Phi(H_n) ^2 \leq H_n^2 = 10\) by \cite[Proposition~2.4.11]{CossecDolgachevLiedtke}, this implies \(\Phi(H_n) = 3\).

For the last statement, recall that the automorphism group of \(X\) acts transitively on the \(5\) outer vertices \(K_n\) (see \cite[p.~233]{Kondo:Enriques.finite.aut}), and, therefore, it acts transitively on the \(5\) divisors \(H_n\).
\end{proof}

\begin{proposition}
There are no ample Fano--Reye polarizations on an Enriques surface \(X\) of type~\(\mathrm{VII}\). In particular, \(X\) is not a classical Reye congruence.
\end{proposition}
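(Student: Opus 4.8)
The plan is to combine Lemma~\ref{lem:Fano.polarizations.type.VII}, which says every ample Fano polarization on $X$ is numerically equivalent to one of the five divisors $H_n$, with the criterion coming from Theorem~\ref{thm:main_criterion} and Lemma~\ref{lem:existence.Reye.bundle}: an ample Fano polarization $H$ is a Fano--Reye polarization only if $H - 2(F_i+T_i)$ is numerically equivalent to an effective divisor for some half-fiber $F_i$ appearing in the $10$-sequence associated to $H$. So it suffices to rule this out for $H = H_n$. By the transitivity statement in Lemma~\ref{lem:Fano.polarizations.type.VII}, the group $\Aut(X) \cong \mathfrak{S}_5$ acts transitively on the $H_n$, so it is enough to treat a single one, say $H_1$.

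First I would determine the $10$-sequence associated to $H_1$. Since $H_1$ is ample, no $(-2)$-curve lies in the sequence by Lemma~\ref{lem:isotropic.elements.in.10.sequence}(1), so every tail $T_i$ is empty and the sequence consists of ten genuine half-fibers $F_1,\dots,F_{10}$ with $H_1.F_i = 3$. Concretely, I would exhibit (or recall from the structure of type $\mathrm{VII}$) ten isotropic classes $E$ with $H_1.E = 3$ pairwise meeting in $1$; from the dual graph these are represented by the obvious genus-one pencils visible among the $E_i$ and $K_j$. The point is then purely lattice-theoretic: I must check that for \emph{every} half-fiber $F$ on $X$ with $H_1.F = 3$, the class $H_1 - 2F$ is \emph{not} numerically equivalent to an effective divisor. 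A class $D$ is numerically equivalent to an effective divisor iff $D$ or $D+K_X$ is effective; since $(H_1 - 2F)^2 = 10 - 12 = -2$ and $H_1.(H_1-2F) = 10 - 6 = 4 > 0$, Riemann--Roch gives $h^0 = h^1 \geq 0$, so effectivity is equivalent to $h^1 \neq 0$, and the real content is to show $H_1 - 2F$ pairs negatively with some $(-2)$-curve in a way that obstructs this — or, more cleanly, that $H_1 - 2F$ simply is not in the effective cone. I would argue this by intersecting $H_1 - 2F$ with a suitable ample or nef test class, or by observing that any effective $C \equiv H_1 - 2F$ would have to be supported on $(-2)$-curves (as $C^2 < 0$, by Lemma~\ref{lem:reducibility} its nef part is $0$) with $H_1.C = 4$, and then showing no nonnegative integer combination of the $(-2)$-curves $E_i, K_j$ meeting $H_1$ with total degree $4$ can have the right class. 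For this last step the explicit Gram matrix of the configuration on $X$ (available from \cite{Kondo:Enriques.finite.aut} / \cite{Moschetti.Rota.Schaffler}) makes it a finite check: there are only finitely many effective $(-2)$-classes of $H_1$-degree $\le 4$, namely sums of at most four of the listed curves, and none of them equals $H_1 - 2F$ for a half-fiber $F$.

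I expect the main obstacle to be organizing this finite check cleanly rather than by brute force over all half-fibers $F$. The half-fibers of $H_1$-degree $3$ come in finitely many numerical classes (bounded using $\Phi(H_1)=3$ and the fact, used already in the proof of Lemma~\ref{lem:Fano.polarizations.type.VII}, that every genus-one pencil on $X$ has a fiber with $\ge 5$ components), so in principle one lists them all; the symmetry $\mathfrak{S}_5$ cuts this down to a few orbits. For each orbit representative $F$ one computes $H_1 - 2F$ and checks it against the (finite) list of effective $(-2)$-classes of degree $\le 4$. The cleanest route is probably to find, for each such $F$, a single nef class $N$ on $X$ with $N.(H_1 - 2F) < 0$, which immediately forbids effectivity of both $H_1 - 2F$ and $H_1 - 2F + K_X$; the half-fibers of the visible pencils are natural candidates for $N$. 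Once no ample Fano polarization passes the criterion, Theorem~\ref{thm:main_criterion} (in its contrapositive form: a Fano--Reye polarization admits a Reye bundle, whose existence is the criterion of Lemma~\ref{lem:existence.Reye.bundle}) shows none of the $H_n$ is Fano--Reye, hence $X$ has no ample Fano--Reye polarization. Finally, by Remark~\ref{rmk:R(W)} a classical Reye congruence carries an ample Fano polarization that is Fano--Reye, so $X$ cannot be a classical Reye congruence.
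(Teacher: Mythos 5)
Your overall strategy---reduce to \(H_1\) by the \(\mathfrak{S}_5\)-symmetry of \Cref{lem:Fano.polarizations.type.VII}, then show \(H_1-2F\) is not numerically equivalent to an effective divisor for half-fibers \(F\) of degree \(3\)---is the paper's strategy, but there are two genuine gaps. The first is logical: what you need is that a Fano--Reye polarization necessarily admits a Reye bundle, i.e.\ the \emph{converse} of \Cref{thm:main_criterion}, not its contrapositive as you write. The contrapositive of ``Reye bundle \(\Rightarrow\) Fano--Reye'' is ``not Fano--Reye \(\Rightarrow\) no Reye bundle,'' which is useless here. The converse is true but is not proved in this paper; it is invoked by citing \cite[Lemma~7.8.1]{DolgachevKondoBook} (one must pull back the tautological subbundle from the quadric \(\mathbb{G}(1,3)\subseteq\mathbb{P}^5\) and identify it as a Reye bundle), so you must supply or cite that argument---\Cref{thm:main_criterion} and \Cref{lem:existence.Reye.bundle} alone do not give it to you.

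The second gap is that the core of your proof is never carried out: you describe a finite lattice check without performing it, and as set up it is far larger than necessary. By \Cref{prop:Reye.bundle.properties}(1), the condition \(|H_1-2F_i+K_X|\neq\emptyset\) holds for some \(i\) if and only if it holds for all \(i\), so it suffices to exhibit a \emph{single} half-fiber \(F\) with \(H_1.F=3\) for which \(H_1-2F\) is not numerically effective; you instead propose to enumerate all such half-fibers and all effective \((-2)\)-classes of \(H_1\)-degree at most \(4\), a substantial enumeration on a type~VII surface that you do not reduce to an explicit, checkable list. The paper instead takes the single simple fiber \(G=E_1+E_2+E_9+E_{10}+E_{12}\), verifies \(G\sim 2F\) with \(H_1.F=3\), and shows \(D=H_1-G\) is not numerically effective in two concrete steps: \(D.E_i<0\) for \(i\in\{3,8,14,15\}\) forces these curves into any effective representative, and \(D-E_3-E_8-E_{14}-E_{15}\) meets the nef fiber \(\sum_{i=1}^9E_i\) negatively. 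Note that this intermediate subtraction is what closes the argument; your hoped-for ``cleanest route'' of a single nef class meeting \(H_1-2F\) negatively is not obviously available. Until a specific \(F\) and a specific obstruction are exhibited, the proposal is a plan rather than a proof.
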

\begin{proof}
By \Cref{lem:Fano.polarizations.type.VII}, we know that \(\Aut(X)\) permutes the ample Fano polarizations on~\(X\), so it suffices to show that \(H_1\) is not a Fano--Reye polarization. Equivalently, by \cite[Lemma~7.8.1]{DolgachevKondoBook}, it suffices to exhibit a half-fiber \(F\) on~\(X\) such that \(H_1.F=3\) and \(H_1 - 2F+K_X\) is not linearly equivalent to an effective divisor.

Consider the elliptic fiber \(G \coloneqq E_1+E_2+E_9+E_{10}+E_{12}\). It intersects the divisors generating \(\Num(X)\) with even multiplicity, so \(G\) is a simple fiber on~\(X\) and there exists a half-fiber \(F\) such that \(G\sim 2F\). One computes that \(H_1.F=3\).

Now, assume by contradiction that \(D \coloneqq H_1-G\) is numerically equivalent to an effective divisor. We have \(D.E_i < 0\) for \(i\in \{3,8,14,15\}\), so \(D' \coloneqq D- E_3-E_8-E_{14}-E_{15}\) is numerically equivalent to an effective divisor as well. However, \(D'\) intersects the elliptic fiber \(G' \coloneqq \sum_{i=1}^9 E_i\) negatively, contradicting the nefness of \(G'\).
\end{proof}

\bibliographystyle{amsplain}
\bibliography{Enriques}

\end{document}